\documentclass[twocolumn]{autart}    

\usepackage{graphicx}          

\usepackage{amsmath,amssymb,amsfonts}
\usepackage{mathtools}
\usepackage{caption}
\usepackage{subcaption}
\usepackage{balance}
\usepackage[dvipsnames]{xcolor}
\usepackage{enumerate}
\usepackage{enumitem}
\usepackage{stmaryrd}
\DeclarePairedDelimiter{\iv}{\llbracket}{\rrbracket}

\usepackage{cite}
\usepackage{url}

\newcommand{\R}{\mathbb{R}}
\newcommand{\I}{\mathbb{I}}
\newcommand{\norm}[1]{\left\lVert#1\right\rVert}
\newcommand{\fe}{\mathsf{f}}
\newcommand{\ve}{\mathsf{v}}
\newcommand{\ee}{\mathsf{e}}
\newcommand{\pe}{\mathsf{p}}
\newcommand{\we}{\mathsf{w}}
\newcommand{\Svec}{(y,\mathbf{u},y^+)}
\newcommand{\RCIvec}{(y,\mathbf{u},y)}
\newcommand{\bfu}{\mathbf{u}}

\newcommand\scalemath[2]{\scalebox{#1}{\mbox{\ensuremath{\displaystyle #2}}}}

\begin{document}

\begin{frontmatter}

\title{Configuration-Constrained Tube MPC for Difference-of-Convex Nonlinear Systems\thanksref{footnoteinfo}} 

\thanks[footnoteinfo]{Support from grants PID2022-141159OB-I00 and PID2022-142946NA-I00 funded by MICIU/AEI/ 10.13039/501100011033 and by ERDF/EU is gratefully acknowledged. F.~Fele also acknowledges support from
grant RYC2021-033960-I funded by MICIU/AEI/ 10.13039/501100011033 and European Union NextGenerationEU/PRTR.\\This paper was not presented at any IFAC meeting. Corresponding author F.~Badalamenti.}

\author[Italy]{Filippo Badalamenti}\ead{filippo.badalamenti@imtlucca.it},    
\author[Sevilla1]{Jose A. Borja-Conde}\ead{jaborja@us.es},               
\author[Sevilla2]{Filiberto Fele}\ead{ffele@us.es},  
\author[Sevilla2]{Teodoro Alamo}\ead{talamo@us.es},               
\author[Sevilla2]{Daniel Limon}\ead{dlm@us.es}               

\address[Italy]{IMT School for Advanced Studies Lucca, Italy}  
\address[Sevilla1]{Universidad Loyola Andalucia, Spain}             
\address[Sevilla2]{Dept.~of Systems Engineering and Automation, University of Seville, Av.~de los Descubrimientos s/n, 41092 Seville, Spain}        

\begin{keyword}                           
Nonlinear predictive control; Robust control; Control of constrained systems; Invariant sets; Convex optimization; Uncertain systems.
\end{keyword}                             

\begin{abstract}
This paper develops a convex tube model predictive control formulation for constrained nonlinear systems. We consider dynamics described by a  discrete-time state-space model with parametric and additive uncertainty that admits a difference-of-convex decomposition. Convex directional bounds of the nonlinear dynamics are combined with configuration-constrained polytopic tubes, whose predefined combinatorial structure yields an affine parameterization of their vertices. The resulting finite-dimensional sufficient conditions certify robust one-step tube propagation under parametric uncertainty and additive disturbances, while allowing the tube geometry and an associated vertex control law to be optimized jointly in a single convex program. An implicit terminal condition guarantees recursive feasibility and convergence of the predicted tube to a target robust control invariant set. Numerical results illustrate the closed-loop properties and the trade-off between geometric flexibility and computational complexity.
\end{abstract}

\end{frontmatter}

\section{Introduction}
\label{sec:introduction}

Tube model predictive control (MPC) is a well-established framework for robust constraint satisfaction in uncertain linear systems~\cite{fb,Kolmanovsky1998}. Rather than predicting a single state trajectory, tube MPC optimizes a sequence of sets enclosing all trajectories generated by the admissible uncertainty realizations. For linear systems and linear difference inclusions, this set-based description leads to tractable formulations with recursive feasibility and stability guarantees~\cite{Langson2004,MAYNE_RakovicRigidTMPC2005219,Rakovic2013_LDI_HTMPC}.

Robust tube propagation is considerably more difficult for nonlinear systems. The image of a convex set under a nonlinear map  cannot generally be characterized through finitely many boundary evaluations, and computing a tight successor set may require solving a nonlinear robust optimization problem~\cite{DIEHL20081279}. Tractable robust nonlinear MPC formulations therefore rely on conservative propagation bounds obtained, for instance, through interval arithmetic~\cite{bravo2005computation}, zonotopes~\cite{bravo2003robust}, linearization or polynomial enclosures~\cite{althoff2008reachability,althoff2013reachability}, Lipschitz and incremental stability arguments~\cite{polver2025robust,Kohler2021}, dynamic tubes~\cite{Lopez2019}, contraction metrics~\cite{Sasfi2023}, or successive convexification~\cite{Lishkova2025}. These approaches offer different trade-offs between modeling assumptions, computational complexity, and conservatism. 
Difference-of-convex (DC) decompositions provide a useful structure for constructing tractable nonlinear propagation bounds. Affine minorants of the associated convex components yield convex outer approximations of the dynamics over the considered domain. This property has been exploited in the computation of robust invariant sets for nonlinear and convex difference-inclusion systems~\cite{FIACCHINI2010,FIACCHINI2012819}, and more recently in robust nonlinear tube MPC~\cite{NLTMPC_9993390,doff2026computationally}. These developments address the convexification of nonlinear propagation, but leave the open problem of how such bounds can be combined with a flexible tube geometry which can be optimized online.

For uncertain linear systems, configuration-constrained tube MPC (CC-TMPC) provides such a parameterization~\cite{CCTMPC_VILLANUEVA2024111543}. This relies on configuration-constrained polytopes, which retain a fixed combinatorial structure as their facet offsets vary, so that their vertices depend affinely on the offset vector. This property allows the tube geometry and an associated vertex control law to be optimized jointly within a convex program. The additional geometric flexibility can reduce conservatism relative to more restrictive tube parameterizations, while its computational cost can be adjusted through restrictions on the template and optimization variables~\cite{badalamenti2025_efficientCCTMPC}. Existing CC-TMPC formulations, however, are limited to uncertain linear dynamics.

This paper extends the configuration-constrained tube framework to uncertain nonlinear systems admitting a DC decomposition. Convex directional bounds are constructed along the template facet normals and evaluated at the vertices of the tube section and the parameter set. Their combination with the affine vertex parameterization yields a finite-dimensional convex certificate for robust one-step propagation under parametric uncertainty and additive disturbances. Based on this certificate, we formulate a nonlinear CC-TMPC controller that jointly optimizes the tube geometry and vertex inputs in a single convex program. An implicit terminal condition is introduced to establish recursive feasibility and convergence of the predicted tube to a target robust control invariant set. The method uses a fixed DC outer approximation and is therefore complementary to approaches based on successive online convexification. Numerical results examine the closed-loop behavior and the trade-off between feasible-region size and computational complexity as the template resolution is varied.

The remainder of the paper is organized as follows. Section~\ref{sec:Sect2} defines the system class, and introduces robust control tubes and configuration-constrained polytopes. Section~\ref{sec:convex_reformulation} derives the convex propagation certificate. Section~\ref{sec:CC-TMPC} presents the resulting MPC formulation and its closed-loop properties. Section~\ref{sec:example} reports the numerical results, followed by conclusions in Section~\ref{sec:conclusions}.

\textit{Notation}: $\I_n$ denotes the $n\times n$ identity matrix. For column vectors $a$ and $b$, $(a,b)$ denotes the stacked vector $[a^\top\ b^\top]^\top$. Given matrices $Q_1,\ldots,Q_r$, $\operatorname{blkd}(Q_1,\ldots,Q_r)$ denotes the corresponding block-diagonal matrix. For any set $X$, $\operatorname{conv}(X)$ and $\operatorname{ri}(X)$ denote its convex hull and relative interior, respectively. For matrices $A$ and $B$, $A\otimes B$ denotes their Kronecker product. The notation $A\succ B$ and $A\succeq B$ means that $A-B$ is positive definite and positive semidefinite, respectively. For $Q=Q^\top\succ0$, we define $\|z\|_Q^2=z^\top Qz$. Vector inequalities are understood componentwise, and $\mathbf 1_n$ denotes the vector of ones in $\mathbb R^n$. The unit simplex is $\Delta^{n-1}\coloneqq\{\lambda\in\mathbb R_+^n\mid\mathbf 1_n^\top\lambda=1\}$. Finally, $\iv{c,d}$ denotes the integer set $\{i\in\mathbb Z\mid c\le i\le d\}$.

\section{Problem Formulation}
\label{sec:Sect2}
This section introduces the uncertain system model and the optimization framework which underpins the proposed robust MPC scheme. We first define the class of nonlinear dynamics of interest, and the DC assumption used to obtain their convex outer approximation. We then formalize robust control tubes (RCTs), and show how configuration‑constrained polytopes provide a tractable vertex‑based reformulation.

\subsection{Uncertain Nonlinear Systems}
\label{subsec:DC}

We consider the discrete-time uncertain nonlinear system
\begin{equation} 
\label{eq:sys} x_{t+1} = f(x_t,u_t,\theta_t)+w_t, \qquad \theta_t\in\Theta,\quad w_t\in\mathbb W, 
\end{equation}
where $x_t \in \mathbb{X} \subset \mathbb{R}^{n_x}$ is the state, $u_t \in \mathbb{U} \subset \mathbb{R}^{n_u}$ is the input, $\Theta \subset \mathbb{R}^{n_\theta}$ describes the parametric uncertainty, and $\mathbb{W} \subset \mathbb{R}^{n_x}$ bounds the additive disturbance. Both uncertainties may vary arbitrarily within their respective sets and are kept separate to emphasize their distinct roles in tube propagation. The sets \(\mathbb X\), \(\mathbb U\), \(\mathbb W\), and \(\Theta\) are assumed nonempty, compact, convex and 
to contain the origin. Moreover, \(\Theta\) is polyhedral and therefore admits the vertex representation
\(
\Theta = \operatorname{conv}\!\big(\{\theta_k\}_{k=1}^{\pe}\big)
\). For the remainder of this section, let \( z := (x,u,\theta) \in \mathcal Z := \mathbb{X}\times\mathbb{U}\times\Theta \), and use $f(z)$ as a shorthand for $f(x,u,\theta)$.

\begin{defn}[DC decomposition]
\label{def:DC_function}
A vector map $f:\mathcal{Z}\to\mathbb{R}^{n_x}$ is said to admit a  DC decomposition on $\mathcal{Z}$ if each component $f_i$ can be written as
\[
f_i(z) = g_i(z) - h_i(z), \qquad i\in\iv{1,n_x},
\]
where $g_i,h_i:\mathcal{Z}\to\mathbb{R}$ are finite-valued convex functions. In this case, we write $f=g-h$, where the equality is intended componentwise.
\end{defn}

The DC assumption is mild on compact convex domains: for instance, any twice continuously differentiable function admits a DC decomposition on such sets~\cite{horst1999dc}. Since DC decompositions are generally not unique, the resulting convexification and the associated conservatism depend on the particular decomposition selected.

Since \(g_i\) and \(h_i\) are finite and convex on \(\mathcal Z\), their subdifferentials are nonempty on \(\operatorname{ri}(\mathcal Z)\). Thus, for any fixed reference point \(\bar z\in\operatorname{ri}(\mathcal Z)\), select \( 
s_{g_i}(\bar z)\in\partial g_i(\bar z)\), \(\quad s_{h_i}(\bar z)\in\partial h_i(\bar z)\) for all \(i\in\iv{1,n_x}\), and define the corresponding affine minorants as
\begin{align*}
g_i^L(z;\bar{z}) &\coloneqq g_i(\bar{z}) + s_{g_i}(\bar{z})^\top (z-\bar{z}),\\
h_i^L(z;\bar{z}) &\coloneqq h_i(\bar{z}) + s_{h_i}(\bar{z})^\top (z-\bar{z});
\end{align*}
if $g_i$ and $h_i$ are differentiable at $\bar{z}$, one may take $s_{g_i}(\bar{z})=\nabla g_i(\bar{z})$ and $s_{h_i}(\bar{z})=\nabla h_i(\bar{z})$. Then, it is possible to obtain a convex upper bound on the projection of $f(z)$ along any direction $c\in\mathbb{R}^{n_x}$, as formalized next (cf.~\cite[Prop.~3]{FIACCHINI2010}).

\begin{lem}
\label{lem:c-bound}
Let $f$ admit a DC decomposition as per Definition~\ref{def:DC_function} and fix $\bar{z}\in\operatorname{ri}(\mathcal{Z})$. For any $c\in\mathbb{R}^{n_x}$, define
\begin{equation}
\label{eq:f_tilde}
\begin{aligned}
\widetilde{f}(z;\bar{z},c)
:={}&
\sum_{i\in \mathcal I^+(c)} c_i \big(g_i(z)-h_i^L(z;\bar{z})\big) \\
&+
\sum_{i\in \mathcal I^-(c)} c_i \big(g_i^L(z;\bar{z})-h_i(z)\big),
\end{aligned}
\end{equation}
where
\begin{equation}\label{eq:sign_set_defn}
\begin{split}
\mathcal I^+(c)&\coloneqq\{i\in\iv{1,n_x}\mid c_i\ge 0\},\\
\mathcal I^-(c)&\coloneqq\{i\in\iv{1,n_x}\mid c_i<0\}.
\end{split}    
\end{equation}
Then:
\begin{enumerate}[label=(\roman*)]
    \item the function $\widetilde{f}(\cdot\,;\bar{z},c)$ is convex on $\mathcal{Z}$;
    \item the  directional upper bound
    \begin{equation}\label{eq:lem1_UB}
    c^\top f(z)\le \widetilde{f}(z;\bar{z},c),
    \end{equation}
    holds for all $z\in\mathcal{Z}$.
\end{enumerate}
\end{lem}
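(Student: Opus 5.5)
The plan is to argue termwise, using the sign partition $\mathcal I^+(c)\cup\mathcal I^-(c)=\iv{1,n_x}$ to decide, for each component, which of $g_i$ and $h_i$ is replaced by its affine minorant. The key fact is the subgradient inequality. Since $s_{g_i}(\bar z)\in\partial g_i(\bar z)$ and $s_{h_i}(\bar z)\in\partial h_i(\bar z)$, convexity of $g_i$ and $h_i$ on $\mathcal Z$ gives
\[
g_i^L(z;\bar z)\le g_i(z),\qquad h_i^L(z;\bar z)\le h_i(z),\qquad \forall z\in\mathcal Z .
\]
These subgradients exist because $\bar z\in\operatorname{ri}(\mathcal Z)$, as remarked before the lemma. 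Both claims will follow from this fact.

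For (i), consider $i\in\mathcal I^+(c)$. The term $g_i(z)-h_i^L(z;\bar z)$ is a convex function minus an affine one, so it is convex, and multiplying by $c_i\ge0$ keeps it convex. Now take $i\in\mathcal I^-(c)$. The term $g_i^L(z;\bar z)-h_i(z)$ is affine minus convex, hence concave, and multiplying by $c_i<0$ makes it convex. The function $\widetilde f(\cdot;\bar z,c)$ is then a finite sum of convex functions on the convex set $\mathcal Z$, so it is convex.

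For (ii), write $c^\top f(z)=\sum_i c_i\big(g_i(z)-h_i(z)\big)$ and compare each summand with the corresponding summand in \eqref{eq:f_tilde}.
\begin{itemize}
\item If $c_i\ge0$, then $h_i(z)\ge h_i^L(z;\bar z)$ gives $g_i(z)-h_i(z)\le g_i(z)-h_i^L(z;\bar z)$, and multiplying by $c_i\ge0$ preserves the inequality.
\item If $c_i<0$, then $g_i(z)\ge g_i^L(z;\bar z)$ gives $g_i(z)-h_i(z)\ge g_i^L(z;\bar z)-h_i(z)$, and multiplying by $c_i<0$ reverses it to the desired upper bound.
\end{itemize}
Summing over $i$ yields \eqref{eq:lem1_UB} for all $z\in\mathcal Z$.

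I do not expect a real obstacle here; the only point needing care is the global validity of the subgradient inequality on $\mathcal Z$. The subgradient is taken at a relative interior point of the domain of a finite convex function on $\mathcal Z$, so the minorant inequality holds on all of $\mathcal Z$, including boundary points. The remaining work is to keep the sign bookkeeping consistent, in particular that components with $c_i=0$ fall in $\mathcal I^+(c)$ and contribute zero.
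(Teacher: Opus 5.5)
Your proposal is correct and follows essentially the same route as the paper's proof. Both arguments apply the affine-minorant inequalities $h_i^L\le h_i$ and $g_i^L\le g_i$ on $\mathcal Z$ termwise along the sign partition $\mathcal I^+(c)\cup\mathcal I^-(c)$: each term is convex (convex minus affine for $c_i\ge 0$, a negative multiple of a concave function for $c_i<0$) and is an upper bound on $c_i f_i$, and both properties survive summation.
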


\begin{pf}
Convexity of $h_i$ implies $h_i^L(z;\bar{z})\le h_i(z)$, for all $z\in\mathcal{Z}$, and
\begin{equation}\label{eq:lemma1_proof_a}
f_i(z) = g_i(z)-h_i(z)\le g_i(z)-h_i^L(z;\bar{z}),
\end{equation}
where the right-hand side is convex since $g_i(\cdot)$ is convex and $h_i^L(\cdot\,;\bar{z})$ is affine. Take $i\in \mathcal I^+(c)$. Since $c_i\ge 0$, multiplying by $c_i$ preserves the inequality in \eqref{eq:lemma1_proof_a}, as well as convexity of the right-hand side. 

Similarly, convexity of $g_i$ implies
$g_i^L(z;\bar{z})\le g_i(z)$, for all $z\in\mathcal{Z}$,
hence
\begin{equation}\label{eq:lemma1_proof_b}
g_i^L(z;\bar{z})-h_i(z)\le g_i(z)-h_i(z)=f_i(z).    
\end{equation}
Let $i\in \mathcal I^-(c)$. Then, multiplying both sides of \eqref{eq:lemma1_proof_b} by $c_i$ reverses the inequality, yielding
\[
c_i f_i(z)\le c_i\big(g_i^L(z;\bar{z})-h_i(z)\big),
\]
where the right-hand side is convex since
$g_i^L(\cdot\,;\bar{z})-h_i(\cdot)$ is concave and $c_i<0$.

The proof is concluded by noticing that the sum over all $i\in \mathcal I^+(c)\, \cup\, \mathcal I^-(c) = \iv{1,n_x}$ maintains the above properties.
\end{pf}
\begin{rem} 
Lemma~\ref{lem:c-bound} uses a common reference point \(\bar z\). Different fixed reference points may instead be selected across components, and separately for \(g_i\) and \(h_i\). Tighter bounds may also be obtained from direction-dependent DC decompositions by considering \( c^\top f(z)=g_c(z)-h_c(z) \). These choices are made offline and remain fixed in the online problem; their systematic selection is left for future research.
\end{rem}

\subsection{Robust Control Tubes}
\label{subsec:rct}

We now recall the set-based framework by which we formulate a robust description of the uncertain propagation of the system dynamics.
For a compact set $X\subseteq\mathbb{R}^{n_x}$, we let $\mathcal{F}(X)$ denote the collection of all sets $X^+$ that contain the one-step robust successors of $X$ through the dynamics \eqref{eq:sys} under admissible control actions:
\begin{equation}
\label{eq:onestepforward}
\mathcal{F}(X)
:=
\left\{
X^+\subseteq\mathbb{R}^{n_x}\ \middle|\
\begin{aligned}
&\forall x\in X,\ \exists u\in\mathbb{U}:\\
&f(x,u,\theta)+w\in X^+,\\
&\forall \theta\in\Theta,\ \forall w\in\mathbb{W}\\
\end{aligned}
\right\}.
\end{equation}

\begin{defn}[Robust control tube]
\label{def:rct}
A sequence of sets $\{X_0,\ldots,X_N\}$, with $X_k\subseteq\mathbb{X}$ for all $k\in\iv{0,N}$, is a \emph{robust control tube (RCT)} if
\begin{equation}
\label{eq::RCT}
X_{k+1}\in\mathcal{F}(X_k),\qquad k\in\iv{0,N\!-\!1}.
\end{equation}
A set $X\subseteq\mathbb{X}$ is \emph{robust control invariant (RCI)} if $\{X,X\}$ is an RCT.
\end{defn}
The above definition leads directly to a robust set-valued MPC formulation, where uncertain state propagation is controlled according to given optimality criteria. In particular, let \(L_0\), \(L\), and \(L_N\) be real-valued functionals of the tube cross-sections defining the initial, stage, and terminal costs, respectively, and let \(\Omega\) denote the terminal set. The canonical finite-horizon tube MPC problem reads as
\begin{equation}
\label{eq:TMPC}
\begin{aligned}
\min_{X_0,\ldots,X_N}\quad &
L_0(X_0)+\sum_{k=0}^{N-1}L(X_k)+L_N(X_N)\\
\text{s.t.}\quad
&x\in X_0,\\
&X_{k+1}\in\mathcal{F}(X_k),\quad k\in\iv{0,N\!-\!1},\\
& X_{N} \subseteq \Omega,\\
&X_k\subseteq\mathbb{X},\qquad\qquad k\in\iv{0,N}.
\end{aligned}
\end{equation}

We note that feasibility of \eqref{eq:TMPC} implies, by \eqref{eq:onestepforward}, the existence of an input sequence that robustly regulates the dynamics~\eqref{eq:sys} from the initial state $x$ to the terminal set \(\Omega\). However, problem~\eqref{eq:TMPC} is intractable in general, since its decision variables are sets and the one-step propagation in~\eqref{eq:onestepforward} involves nonlinear robust reachability. The approach developed in this paper addresses numerical tractability by:
\begin{enumerate}[label=(\roman*)]
    \item replacing the original nonlinear dynamics by a convex outer approximation that yields a sufficient one-step tube propagation certificate;
    \item restricting tube cross-sections to a family of polytopes with fixed combinatorial structure, so that set propagation can be efficiently encoded in finite-dimensional form.
\end{enumerate}
In Section~\ref{sec:convex_reformulation}, we build on the result of Lemma~\ref{lem:c-bound} to obtain a convex envelope of the uncertain dynamics. Below, we introduce the main ingredient of this envelope.

\subsection{Configuration‑Constrained Polytopes}
\label{subsec:ccp}

We parameterize tube cross-sections as polytopes of the form
\begin{equation}
\label{eq:CCPolytope}
X(y)\coloneqq \{x\in\mathbb{R}^{n_x}\mid Fx\le y\},
\end{equation}
where $F\in\mathbb{R}^{\fe\times n_x}$ is a fixed \emph{template} matrix and $y\in\mathbb{R}^{\fe}$ is a manipulable parameter. We assume throughout that
\(
Fx\le 0  \implies  x=0,
\)
or equivalently \(X(0)=\{0\}\), which ensures that \(X(y)\) is bounded whenever it is nonempty. 

The parameterization~\eqref{eq:CCPolytope} alone does not guarantee that the combinatorial face structure of $X(y)$ remains unchanged as $y$ varies. Following~\cite{CCTMPC_VILLANUEVA2024111543}, we therefore restrict $y$ to a set of admissible values for which the face lattice is preserved.

\begin{defn}[Configuration cone]
The polyhedral cone
\[
\mathcal{E}\coloneqq \{y\in\mathbb{R}^{\fe}\mid Ey\le 0\}
\]
is called a \emph{configuration cone} for the family~\eqref{eq:CCPolytope} if the polytope $X(y)$ has the same face configuration for all $y\in\mathcal{E}$.
\end{defn}
The position of facets and vertices depends \emph{affinely} on $y$ when the latter is restricted to a configuration cone: in particular, there exist matrices $\boldsymbol{W}\coloneqq \{W_j\}_{j=1}^{\ve}$ such that
\begin{equation}
\label{eq:vertex_config}
X(y)=\operatorname{conv}(\{W_j y\}_{j=1}^\ve),\qquad \forall y\in\mathcal{E}.
\end{equation}
Thus, we refer to $\langle F,E,\boldsymbol{W}\rangle$ as a \emph{configuration triple}. Its construction is nontrivial; procedures for generating such triples and trading geometric flexibility for reduction in complexity are discussed in~\cite{CCTMPC_VILLANUEVA2024111543}. In Appendix~\ref{app:template_dc}, we present an offline NLP procedure for template initialization through sign-preserving deformation, which extends the procedure in~\cite[\S IV-B]{badalamenti2025_efficientCCTMPC} from linear difference inclusions to the present DC setting.

\section{Convex Reformulation for Tube Propagation}
\label{sec:convex_reformulation}

Here we specialize Lemma~\ref{lem:c-bound} to the directions defined by the facet normals of the polytope family~\eqref{eq:CCPolytope}. In particular, we construct an upper bound of the form \eqref{eq:lem1_UB} for each of the directions encoded by the rows of $F$, considering a fixed linearization point $\bar{z}:=(\bar{x}, \bar{u}, \bar{\theta})\in\mathbb{X}\times\mathbb{U}\times\Theta$.

Recall 
\(
F=\begin{bmatrix}F_1^\top \cdots\,  F_\fe^\top \end{bmatrix}^\top\in\mathbb{R}^{\fe\times n_x},
\)
where each row $F_l\in\mathbb{R}^{1\times n_x}$ defines the normal to facet $l$. For $(x,u,\theta)\in\mathbb{X}\times\mathbb{U}\times\Theta$, define
\begin{equation}
\label{eq:upperbound_polytope}
        \widetilde{f}_F(x, u,\theta) := 
        \begin{bmatrix}
        \widetilde{f}((x, u, \theta); (\bar{x}, \bar{u}, \bar{\theta}), F_1^\top) \\
        \widetilde{f}((x, u, \theta); (\bar{x}, \bar{u}, \bar{\theta}), F_2^\top) \\
        \vdots \\
        \widetilde{f}((x, u, \theta); (\bar{x}, \bar{u}, \bar{\theta}), F_\fe^\top)
        \end{bmatrix}
        \in \R^\fe.
    \end{equation}
Note that $(\bar{x}, \bar{u}, \bar{\theta})$ is fixed at design time. To ease the notation, we consider it implicit throughout the rest of the paper. 
The following result is an immediate consequence of Lemma~\ref{lem:c-bound}.

\begin{lem}
\label{prop:convex-upperbound} 
Each component of $\widetilde f_F(x,u,\theta)$ is convex in $(x,u,\theta)$ on $\mathbb{X}\times\mathbb{U}\times\Theta$, and
\begin{equation}
\label{eq:Ff_upperbound}
Ff(x,u,\theta)\le \widetilde f_F(x,u,\theta),
\quad \forall (x,u,\theta)\in\mathbb{X}\times\mathbb{U}\times\Theta,
\end{equation}
where the inequality holds componentwise.
\end{lem}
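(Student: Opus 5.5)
The plan is to apply Lemma~\ref{lem:c-bound} once for each row of the template. Fix $l\in\iv{1,\fe}$ and set $c:=F_l^\top\in\mathbb R^{n_x}$. By construction~\eqref{eq:upperbound_polytope}, the $l$-th component of $\widetilde f_F(x,u,\theta)$ is exactly $\widetilde f(z;\bar z,F_l^\top)$ with $z=(x,u,\theta)$ and the fixed design point $\bar z=(\bar x,\bar u,\bar\theta)$. The index sets $\mathcal I^\pm(F_l^\top)$ are determined by the signs of the entries of $F_l$, so each component is built from the same DC data $g_i,h_i$ together with their affine minorants at $\bar z$.

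First I will invoke part~(i) of Lemma~\ref{lem:c-bound} with $c=F_l^\top$. This gives convexity of the $l$-th component on $\mathcal Z=\mathbb X\times\mathbb U\times\Theta$.

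Next I will invoke part~(ii), which gives $F_l f(z)=c^\top f(z)\le\widetilde f(z;\bar z,F_l^\top)$ for all $z\in\mathcal Z$. Since $l$ is arbitrary, stacking the $\fe$ scalar inequalities yields the componentwise inequality~\eqref{eq:Ff_upperbound}, because the $l$-th entry of $Ff(x,u,\theta)$ is $F_l f(x,u,\theta)$.

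The only point needing care, and the main obstacle, is the hypothesis of Lemma~\ref{lem:c-bound}, which requires $\bar z\in\operatorname{ri}(\mathcal Z)$. In Section~\ref{sec:convex_reformulation}, however, $\bar z$ is only stated to lie in $\mathbb X\times\mathbb U\times\Theta$. I would handle this by noting that the proof of Lemma~\ref{lem:c-bound} uses $\bar z$ only through the existence of subgradients $s_{g_i}(\bar z)\in\partial g_i(\bar z)$ and $s_{h_i}(\bar z)\in\partial h_i(\bar z)$. Those subgradients are what make $g_i^L,h_i^L$ global affine minorants on $\mathcal Z$.

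So I will either read the design choice of $\bar z$ as one lying in $\operatorname{ri}(\mathcal Z)$, consistent with the earlier standing choice, or assume that subgradients exist at $\bar z$. The latter holds, for example, if $g_i,h_i$ extend to finite convex functions on an open neighborhood of $\mathcal Z$, as in the smooth DC case. Under either reading the two steps above go through verbatim, and the lemma follows.
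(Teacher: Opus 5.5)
Your proposal is correct and follows the paper's approach: the paper states this lemma as an immediate consequence of Lemma~\ref{lem:c-bound}, applied with $c=F_l^\top$ for each $l\in\iv{1,\fe}$ and stacked row by row, using part~(i) for convexity and part~(ii) for the inequality. Your remark on $\bar z\in\operatorname{ri}(\mathcal Z)$ correctly points out a small inconsistency, since Section~\ref{sec:convex_reformulation} only requires $\bar z\in\mathbb X\times\mathbb U\times\Theta$; either of your two readings resolves it, and the point used in the numerical example is in fact interior.
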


Lemma~\ref{prop:convex-upperbound} yields a finite-dimensional sufficient certificate for robust one-step propagation. Define the support vector \(d\in\mathbb R^\fe\) of the disturbance set along the directions defined by the rows of \(F\) as
\[
d_l\coloneqq \max_{w\in\mathbb{W}}F_l w,\qquad l\in\iv{1,\fe}.
\]

\begin{prop}
\label{prop:surrogate_implies_original}
Let \(\Theta=\operatorname{conv}(\{\theta_k\}_{k=1}^{\pe})\) and let \(X(y)\) be nonempty. Suppose that, for every \(x\in X(y)\), there exists \(u\in\mathbb{U}\) such that
\begin{equation}
\label{eq:finite_prop_certificate}
\widetilde f_F(x,u,\theta_k)+d\le y^+,\qquad \forall k\in\iv{1,\pe}.
\end{equation}
Then \(X(y^+)\in\mathcal F(X(y))\).
\end{prop}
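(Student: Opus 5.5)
Fix $x\in X(y)$ and let $u\in\mathbb U$ be the input supplied by the hypothesis, so that \eqref{eq:finite_prop_certificate} holds at every vertex $\theta_k$ of $\Theta$. By the definition of $\mathcal F$ in \eqref{eq:onestepforward}, it suffices to show that $f(x,u,\theta)+w\in X(y^+)$ for all $\theta\in\Theta$ and $w\in\mathbb W$. Equivalently, we must show $F\big(f(x,u,\theta)+w\big)\le y^+$ componentwise. The plan is to handle the two uncertainties separately: the disturbance through the support vector $d$, and the parameter through convexity of $\widetilde f_F$ in $\theta$ combined with the vertex representation of $\Theta$.

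The steps are as follows. Fix a facet index $l\in\iv{1,\fe}$, and take arbitrary $\theta\in\Theta$ and $w\in\mathbb W$.
\begin{enumerate}[label=(\alph*)]
\item By the definition of $d_l$ we have $F_l w\le d_l$.
\item Since $x\in X(y)$, we need $x\in\mathbb X$ in order to apply Lemma~\ref{prop:convex-upperbound}; this requires $X(y)\subseteq\mathbb X$, which I address below. Given this, $(x,u,\theta)\in\mathbb X\times\mathbb U\times\Theta$, and \eqref{eq:Ff_upperbound} gives $F_l f(x,u,\theta)\le [\widetilde f_F(x,u,\theta)]_l$.
\item Write $\theta=\sum_k\lambda_k\theta_k$ with $\lambda\in\Delta^{\pe-1}$. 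The map $\theta\mapsto[\widetilde f_F(x,u,\theta)]_l$ is convex on $\Theta$, since it is the restriction of a function jointly convex in $(x,u,\theta)$ (Lemma~\ref{prop:convex-upperbound}). Jensen's inequality then gives
\[
[\widetilde f_F(x,u,\theta)]_l\le\sum_k\lambda_k[\widetilde f_F(x,u,\theta_k)]_l\le\sum_k\lambda_k\,(y^+_l-d_l)=y^+_l-d_l,
\]
where the second inequality is \eqref{eq:finite_prop_certificate}.
\item Chaining (a)–(c) yields $F_l(f(x,u,\theta)+w)\le y^+_l$. Since $l$, $\theta$, $w$ and $x$ were arbitrary, this proves $X(y^+)\in\mathcal F(X(y))$.
\end{enumerate}

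The only delicate point is the domain issue in step (b). Lemma~\ref{prop:convex-upperbound}, and the DC decomposition itself, are stated only on $\mathbb X\times\mathbb U\times\Theta$, so $x$ must lie in $\mathbb X$. The definition of $\mathcal F$ quantifies over all $x\in X(y)$, not only over $X(y)\cap\mathbb X$. I would resolve this by noting that in the tube setting (Definition~\ref{def:rct} and \eqref{eq:TMPC}) every cross-section satisfies $X_k\subseteq\mathbb X$, and by reading the hypothesis as implicitly restricting $x\in X(y)\subseteq\mathbb X$, since $\widetilde f_F$ is only defined there. If needed, I would state $X(y)\subseteq\mathbb X$ explicitly as a standing assumption. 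The nonemptiness assumption is only needed so that the quantifier over $x$ is not vacuous.

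All remaining steps are direct. Convexity in $\theta$ follows from joint convexity, and Jensen's inequality on the simplex requires only finite convex combinations, so no compactness or continuity argument is needed.
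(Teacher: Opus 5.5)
Your proof is correct and follows the paper's argument essentially step for step: Jensen's inequality over the vertices of $\Theta$ using convexity of $\widetilde f_F$ in $\theta$, then the bound $Fw\le d$, then the directional upper bound of Lemma~\ref{prop:convex-upperbound}. Your observation that the hypothesis implicitly requires $X(y)\subseteq\mathbb X$ is a legitimate point the paper's proof leaves unstated, since $\widetilde f_F$ and the DC bounds are only defined and valid on $\mathbb X\times\mathbb U\times\Theta$; in the paper's later use through $\widetilde{\mathbb S}$, this inclusion is guaranteed by $W_jy\in\mathbb X$ together with convexity of $\mathbb X$.
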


\begin{pf}
Fix \(x\in X(y)\), and let \(u\in\mathbb{U}\) satisfy~\eqref{eq:finite_prop_certificate}. Since each component of \(\widetilde f_F(x,u,\theta)\) is convex in \(\theta\), 
Jensen's inequality gives
\[
\widetilde f_F(x,u,\theta)+d\le y^+,\qquad \forall \theta\in\Theta .
\]
Recall \(Fw\le d\) for all \(w\in\mathbb{W}\). Hence
\[
\widetilde f_F(x,u,\theta)+Fw\le y^+,\qquad \forall \theta\in\Theta,\ \forall w\in\mathbb{W} .
\]
Using Lemma~\ref{prop:convex-upperbound},
\[
F(f(x,u,\theta)+w)\le y^+,\qquad \forall \theta\in\Theta,\ \forall w\in\mathbb{W},
\]
and therefore \(f(x,u,\theta)+w\in X(y^+)\). Since, by hypothesis, for every \(x\in X(y)\) there exists an admissible input \(u\in\mathbb U\) for which all uncertain successors belong to \(X(y^+)\), it follows from \eqref{eq:onestepforward} that 
\(X(y^+)\in\mathcal F(X(y))\).
\end{pf}
Thus, robust propagation is guaranteed by the finite inequalities~\eqref{eq:finite_prop_certificate}. Next, we qualify the existence of an admissible state-dependent input by exploiting the vertex representation of configuration-constrained polytopes.

\subsection{Configuration-Constrained RCTs}
\label{subsec:cc_rct_final}

In this section, we formulate a finite-dimensional condition implying~\eqref{eq:finite_prop_certificate}, provided that \(X(y)\) is nonempty.

Recall that, by convexity, an admissible input for an arbitrary \(x\in X(y)\) is obtained as the convex combination of the inputs corresponding to the vertices of $X(y)$. Let
\(
\mathbf{u}\coloneqq \left(u_1,\ldots,u_\ve\right)
\in\mathbb{R}^{n_u\ve}
\)
collect the inputs assigned to the $\ve$ vertices of $X(y)$, and define the selector matrices
\[
U_j\coloneqq e_j^\top\otimes \I_{n_u}\in\mathbb{R}^{n_u\times n_u\ve},
\qquad j\in\iv{1,\ve},
\]
where $e_j$ is the $j$-th standard basis in $\mathbb{R}^\ve$, so that $U_j \mathbf{u}=u_j$. We introduce the set
\(
\widetilde{\mathbb S}\subseteq
\mathbb{R}^{\fe}\times\mathbb{R}^{n_u\ve}\times\mathbb{R}^{\fe}
\) to define the one-step robust propagation condition for sets parameterized as configuration-constrained polytopes:
\begin{align}
\label{eq:S_tilde_definition}
    \widetilde{\mathbb{S}}\coloneqq
    \left\{ \begin{pmatrix}
        y\\\mathbf{u}\\y^+ 
    \end{pmatrix} \, \middle| \,  
    \begin{aligned}
    &\forall (j,k)\in\iv{1,\ve}\times\iv{1,\pe},
    \vspace{3pt}\\
    &\widetilde{f}_F(W_jy,U_j\mathbf{u},\theta_k)+d \leq y^+,\vspace{3pt}\\
    & Ey \leq 0, \ W_j y \in \mathbb{X}, \ U_j \mathbf{u} \in \mathbb{U}
    \end{aligned} \right\}.
\end{align}
By construction, the set \(\widetilde{\mathbb S}\) is convex in $\Svec$, since it is defined by convex inequalities and linear constraints. Then, the following result holds.

\begin{lem}
\label{lem:RCT_CC_DC}
If $\Svec\in \widetilde{\mathbb S}$, then $X(y^+)\in \mathcal F(X(y))$.
\end{lem}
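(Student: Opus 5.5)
The plan is to reduce the claim to Proposition~\ref{prop:surrogate_implies_original}. For every $x\in X(y)$ we will exhibit $u\in\mathbb U$ satisfying~\eqref{eq:finite_prop_certificate}. The certificate is only imposed at the vertices, so we interpolate the vertex inputs by convex combination and use convexity of $\widetilde f_F$ jointly in $(x,u)$ to pass from vertices to interior points.

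First, note that $Ey\le 0$ means $y\in\mathcal E$, so~\eqref{eq:vertex_config} gives $X(y)=\operatorname{conv}(\{W_jy\}_{j=1}^\ve)$. In particular $X(y)$ is nonempty, since it contains $W_1y$, as Proposition~\ref{prop:surrogate_implies_original} requires. Fix $x\in X(y)$ and choose $\lambda\in\Delta^{\ve-1}$ with $x=\sum_j\lambda_jW_jy$. Define $u:=\sum_j\lambda_jU_j\mathbf u$. Each $U_j\mathbf u\in\mathbb U$ and $\mathbb U$ is convex, so $u\in\mathbb U$. Likewise $x\in\mathbb X$, because each $W_jy\in\mathbb X$ and $\mathbb X$ is convex. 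Hence $(x,u,\theta_k)$ lies in the domain $\mathbb X\times\mathbb U\times\Theta$ on which Lemma~\ref{prop:convex-upperbound} asserts convexity.

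Next, fix $k$ and apply Jensen's inequality to each component of $\widetilde f_F(\cdot,\cdot,\theta_k)$. These components are convex in $(x,u)$ for fixed $\theta_k$, being restrictions of functions jointly convex in $(x,u,\theta)$. The point $(x,u,\theta_k)$ is the convex combination $\sum_j\lambda_j(W_jy,U_j\mathbf u,\theta_k)$, so
\[
\widetilde f_F(x,u,\theta_k)+d\le\sum_{j=1}^{\ve}\lambda_j\big(\widetilde f_F(W_jy,U_j\mathbf u,\theta_k)+d\big)\le\sum_{j=1}^{\ve}\lambda_j y^+=y^+,
\]
where the last inequality uses the defining constraints of $\widetilde{\mathbb S}$ and $\mathbf 1^\top\lambda=1$. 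This holds for every $k\in\iv{1,\pe}$, so~\eqref{eq:finite_prop_certificate} is satisfied at $x$. Proposition~\ref{prop:surrogate_implies_original} then yields $X(y^+)\in\mathcal F(X(y))$.

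The argument is routine; the only step needing care is domain bookkeeping. Convexity and the bound in Lemma~\ref{prop:convex-upperbound} are stated only on $\mathbb X\times\mathbb U\times\Theta$, so we must check that the interpolated pair $(x,u)$ stays in $\mathbb X\times\mathbb U$. That is exactly what the constraints $W_jy\in\mathbb X$ and $U_j\mathbf u\in\mathbb U$, together with convexity of these sets, guarantee. We should also make explicit that Jensen's inequality is applied jointly over the state and input arguments, not only over $\theta$ as in the proof of Proposition~\ref{prop:surrogate_implies_original}.
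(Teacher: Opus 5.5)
Your proof is correct and follows the paper's argument essentially step for step. You use the vertex representation for $y\in\mathcal E$, interpolate the vertex inputs with the same barycentric weights, apply Jensen's inequality to $\widetilde f_F(\cdot,\cdot,\theta_k)$ jointly in $(x,u)$, and conclude via Proposition~\ref{prop:surrogate_implies_original}; your explicit checks that $X(y)$ is nonempty and that $x\in\mathbb X$ are left implicit in the paper and make the domain bookkeeping cleaner.
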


\begin{pf}
Let $x\in X(y)$. Since $y\in\mathcal{E}$, there exists $\{W_j\}_{j=1}^{\ve}$ such that $X(y)=\operatorname{conv}(\{W_j y\}_{j=1}^\ve)$, cf.~\eqref{eq:vertex_config}. Then by Carath\'eodory's theorem there exists $\lambda(x)\in\Delta^{\ve-1}$ such that $x=\sum_{j=1}^{\ve}\lambda_j(x)W_j y$. Define 
\begin{equation}
\label{eq:vertex_interpolated_input}
\mu(x)\coloneqq\sum_{j=1}^{\ve}\lambda_j(x)\,U_j \mathbf{u}.
\end{equation}
Since $U_j\mathbf{u}\in\mathbb{U}$ for all $j\in\iv{1,\ve}$ and $\mathbb{U}$ is convex, it follows that $\mu(x)\in\mathbb{U}$. 

For every \(k\in\iv{1,\pe}\), convexity of \(\widetilde f_F(\cdot,\cdot,\theta_k)\) gives
\[
\widetilde f_F(x,\mu(x),\theta_k) \le \sum_{j=1}^{\ve}\lambda_j(x)\widetilde f_F(W_jy,U_j\mathbf u,\theta_k).
\]
Using \((y,\mathbf u,y^+)\in\widetilde{\mathbb S}\), the above inequality implies
\[
\widetilde f_F(x,\mu(x),\theta_k)+d\le y^+,\qquad \forall k\in\iv{1,\pe}.
\]
Thus the hypothesis of Proposition~\ref{prop:surrogate_implies_original} holds with \(u=\mu(x)\) for every \(x\in X(y)\), and the claim follows.
\end{pf}

Lemma~\ref{lem:RCT_CC_DC} provides a tractable sufficient certificate for robust one-step tube propagation. The converse, however, need not hold: a state-dependent admissible input satisfying~\eqref{eq:finite_prop_certificate} might not be expressed as convex combination of an admissible collection of vertex inputs (i.e., satisfying ~\eqref{eq:S_tilde_definition}).
Hence, the proposed finite-dimensional reformulation may introduce conservatism in addition to that arising from the DC-based outer approximation.

Lemma~\ref{lem:RCT_CC_DC} also provides a finite-dimensional sufficient condition for robust control invariance (cf.~Definition~\ref{def:rct}).

\begin{cor}[CC-RCI certificate]
\label{cor:RCI_CC}
If there exists $\mathbf{u}$ such that $\RCIvec\in\widetilde{\mathbb S}$, then the configuration-constrained polytope $X(y)$ is an RCI set for system~\eqref{eq:sys}.
\end{cor}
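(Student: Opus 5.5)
The plan is to specialize Lemma~\ref{lem:RCT_CC_DC} to the case $y^+=y$ and then read off the definition of RCI. Suppose there is $\mathbf{u}$ with $(y,\mathbf{u},y)\in\widetilde{\mathbb S}$. Applying Lemma~\ref{lem:RCT_CC_DC} with $y^+:=y$ gives directly $X(y)\in\mathcal F(X(y))$. In other words, for every $x\in X(y)$ there is $u\in\mathbb U$ such that $f(x,u,\theta)+w\in X(y)$ for all $\theta\in\Theta$ and $w\in\mathbb W$. A valid choice is the interpolated vertex input $\mu(x)$ of~\eqref{eq:vertex_interpolated_input}.

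It then remains to check the side condition in Definition~\ref{def:rct} that the cross-section lies in the state constraint set, i.e.\ $X(y)\subseteq\mathbb X$. The membership in $\widetilde{\mathbb S}$ includes $Ey\le 0$, so $y\in\mathcal E$, and by~\eqref{eq:vertex_config} we have $X(y)=\operatorname{conv}(\{W_jy\}_{j=1}^{\ve})$. The constraints $W_jy\in\mathbb X$ for all $j$, together with convexity of $\mathbb X$, then give $X(y)\subseteq\mathbb X$. Hence $\{X(y),X(y)\}$ is an RCT, which by Definition~\ref{def:rct} means that $X(y)$ is RCI.

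The only delicate point is nonemptiness of $X(y)$. Proposition~\ref{prop:surrogate_implies_original} is stated for nonempty $X(y)$, and Lemma~\ref{lem:RCT_CC_DC} relies on it. I expect this to be harmless: if $X(y)$ were empty, the RCT inclusion would hold vacuously, since $\mathcal F(\emptyset)$ contains every set. Moreover, for $y\in\mathcal E$ the vertex representation~\eqref{eq:vertex_config} with $\ve\ge1$ produces the points $W_jy\in X(y)$, so $X(y)$ is in fact nonempty. I would note this briefly rather than treat it as a genuine obstacle.
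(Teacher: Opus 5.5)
Your proposal is correct and follows the paper's proof, which applies Lemma~\ref{lem:RCT_CC_DC} with $y^+=y$ to get $X(y)\in\mathcal F(X(y))$ and reads off the RCI definition. You also check explicitly that $X(y)\subseteq\mathbb X$, using $Ey\le 0$, the vertex representation, $W_jy\in\mathbb X$ and convexity of $\mathbb X$; Definition~\ref{def:rct} requires this, and the paper's one-line proof leaves it implicit.
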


\begin{pf}
If $\RCIvec\in\widetilde{\mathbb S}$, Lemma~\ref{lem:RCT_CC_DC} gives \(X(y)\in\mathcal F(X(y))\), which recovers the definition of robust control invariance.
\end{pf}

\section{Nonlinear CC-TMPC for Regulation}
\label{sec:CC-TMPC}

We are now ready to develop a tube MPC design for regulation, based on the one-step propagation certificate derived in the previous section.
Let the sequences $\{y_0,\ldots,y_N\}$ and $\{\bfu_0,\ldots,\bfu_{N}\}$ parameterize the RCT along the $N$-step prediction horizon.
The design goal is to achieve convergence of the RCT to a predefined CC-RCI set. We configure the latter by a pair $(y^\circ, \mathbf{u}^\circ)$ which satisfies the RCI condition. This can be chosen, for instance, as the minimizer of some given convex criterion $\omega\colon \R^\fe \times\R^{n_u\ve}\to \R$, i.e.,
\begin{equation}
 \label{eq:OptRCI_set}
(y^\circ,\mathbf{u}^\circ)\in \arg\min_{y,\mathbf{u}} \; \omega(y,\mathbf{u}) \quad \text{s.t.} \quad  \RCIvec \in \widetilde{\mathbb{S}},
\end{equation}
where \(\widetilde{\mathbb S}\), defined in~\eqref{eq:S_tilde_definition}, encodes the one-step robust propagation condition together with the state, input, and configuration constraints. Note that the target $(y^\circ,\mathbf{u}^\circ)$ is fixed throughout: as such, it can be computed offline.

Fix a contraction factor \(\gamma\in[0,1)\). Then, for a given state \(x\in\mathbb X\), the proposed configuration-constrained tube MPC problem takes the form (cf.~\eqref{eq:TMPC})
\begin{subequations}
\label{eq:CCTMPC_new}
\begin{align}
\min_{\substack{\{y_0,\ldots,y_N\},\\ \{\mathbf{u}_0,\ldots,\mathbf{u}_{N}\}}} &\ \sum_{k=0}^{N-1} 
    L(y_k,\bfu_k)
    + L_N(y_N, \bfu_N) \label{eq:CCTMPC_cost}\\
    \text{s.t.}\hspace{5pt} &\  F{x} \leq y_0, \label{eq:CCTMPC_init_constr}\\
    &\ (y_k,\mathbf{u}_k,y_{k+1}) \in \widetilde{\mathbb{S}},\quad  k\in\iv{0,N\!-\!1}, \label{eq:CCTMPC_tube_constr}\\
    &\ (y_N,\mathbf{u}_N,\gamma y_N + (1-\gamma)y^\circ) \in \widetilde{\mathbb{S}}.\label{eq:CCTMPC_term_constr}
\end{align}
\end{subequations}
Its feasible set is denoted by \(\mathbb X_N\coloneqq\{x\in\mathbb X\mid \eqref{eq:CCTMPC_new}\text{ is feasible}\}\). Here, $L\colon \R^\fe \times\R^{n_u\ve}\to \R$ is a convex, positive-definite regulation cost that vanishes at the optimal RCI parameters $(y^\circ,\mathbf{u}^\circ)$.
We use a terminal cost $L_N: \R^{\fe}\times\R^{n_u\ve}\to\R$ together with the contractive one-step condition \eqref{eq:CCTMPC_term_constr}. This yields an implicit terminal set for the chosen tube parameterization, i.e., $\Omega\subseteq\mathbb{R}^\fe$. This induces a closed-loop Lyapunov descent property in the tube configuration space, ensuring convergence of the RCT to the optimal RCI set.
We discuss this in detail in the next section, focusing on the choice
\begin{align*}
L(y,\mathbf{u}) & \coloneqq
    \norm{(y-y^\circ,\mathbf{u}-\mathbf{u}^\circ)}_Q^2\\
L_N(y,\mathbf{u})  & \coloneqq
    \norm{(y-y^\circ,\mathbf{u}-\mathbf{u}^\circ)}_P^2,
\end{align*}
with $Q,P\succ0$.\footnote{The results can be extended with minor modifications to more general definitions. The interested reader is referred to \cite{Villanueva2020}.}

\subsection{Implicit Terminal Set}

We now discuss how the terminal cost $L_N$, together with the constraint \eqref{eq:CCTMPC_term_constr}, produce an implicit terminal set $\Omega\subseteq\mathbb{R}^\fe$ where desired Lyapunov-type properties are satisfied for the RCT parameters. 
Define 
\begin{subequations}\label{eq:cost_to_travel}
\begin{align}
V(y,y^+)\coloneqq &\min_{\mathbf{u}} \,   L(y,\mathbf{u}) \\ 
& \ \text{s.t.}  \ \ \Svec \in \widetilde{\mathbb{S}}. \label{eq:cost_to_travel_b}
\end{align}
\end{subequations}
Whenever \eqref{eq:cost_to_travel} is infeasible, set \(V(y,y^+)=+\infty\); the same convention is adopted in the next definitions.
Also,
\begin{subequations}\label{eq:terminal_problem}
\begin{align}
    T(y):=&\min_{\mathbf{u}} \,   L_N(y,\bfu)
    \label{eq:terminal_problem_a}\\
    & \hspace{5pt} \text{s.t.} \ \  (y,\mathbf{u},\gamma y + (1-\gamma)y^\circ) \in \widetilde{\mathbb{S}}. 
\end{align}
\end{subequations}
Let \(\Omega\coloneqq \big\{ y  \mid  T(y) < +\infty \big\}\) and note that feasibility of \eqref{eq:OptRCI_set} implies that $\Omega\neq\emptyset$ for any $\gamma\in[0,1)$.

With these definitions in hand, problem \eqref{eq:CCTMPC_new} can be rewritten as
\begin{subequations}\label{eq:CCTMPC_CTT}
\begin{align}
\min_{\{y_0,\ldots,y_N\}} &\ \sum_{k=0}^{N-1} V(y_k,y_{k+1}) + T(y_N) \label{eq:CCTMPC_CTT_a}\\
\text{s.t.}\hspace{3pt} &\ F x \leq y_0,\label{eq:CCTMPC_CTT_b} 
\end{align}
\end{subequations}
where the first term in \eqref{eq:CCTMPC_CTT_a} is known as the \emph{cost-to-travel} function~\cite{Villanueva2020}. Next, we show that the function \(T(y_N)\) satisfies the Lyapunov descent condition over $\Omega$.

\begin{lem}
\label{lem:terminal_decrease}
Let $\gamma\in[0,1)$ and $Q,P\succ 0$ satisfy $Q+\gamma^2 P \preceq P$. The following holds: 
\begin{enumerate}[label=(\roman*)]
    \item the function \(T(y)\) is null at $y=y^\circ$;
    \item for any \(\bar{y} \in \Omega\), 
    \begin{equation}
        \label{eq:descent_condition}
        \min_{y^+} \ T(y^+)+ V(\bar{y},y^+) \leq T(\bar{y}). 
    \end{equation}
\end{enumerate}
\end{lem}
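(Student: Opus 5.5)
The plan is to exploit two facts: the set $\widetilde{\mathbb S}$ in~\eqref{eq:S_tilde_definition} is convex, and the target pair $(y^\circ,\mathbf u^\circ)$ from~\eqref{eq:OptRCI_set} satisfies $(y^\circ,\mathbf u^\circ,y^\circ)\in\widetilde{\mathbb S}$. The descent in (ii) will come from an explicit convex-combination candidate for both $y^+$ and the terminal inputs.

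For (i), I note that at $y=y^\circ$ the terminal target is $\gamma y^\circ+(1-\gamma)y^\circ=y^\circ$. Hence $\mathbf u=\mathbf u^\circ$ is feasible in~\eqref{eq:terminal_problem}, and it gives $L_N(y^\circ,\mathbf u^\circ)=0$. Since $P\succ0$, $L_N\ge 0$ everywhere, so $T(y^\circ)=0$.

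For (ii), I fix $\bar y\in\Omega$ and take any $\bar{\mathbf u}$ feasible for~\eqref{eq:terminal_problem} at $\bar y$. To avoid attainment issues I let $\bar{\mathbf u}$ be $\varepsilon$-optimal, i.e., $L_N(\bar y,\bar{\mathbf u})\le T(\bar y)+\varepsilon$, and send $\varepsilon\to0$ at the end. If the minimum is attained, as expected from coercivity of $L_N$ and closedness of $\widetilde{\mathbb S}$, I simply take $\varepsilon=0$. I then choose the candidate
\[
y^+\coloneqq\gamma\bar y+(1-\gamma)y^\circ .
\]
Since $(\bar y,\bar{\mathbf u},y^+)\in\widetilde{\mathbb S}$ by feasibility of $\bar{\mathbf u}$, definition~\eqref{eq:cost_to_travel} gives $V(\bar y,y^+)\le L(\bar y,\bar{\mathbf u})$. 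For the terminal part I set $\mathbf u^+\coloneqq\gamma\bar{\mathbf u}+(1-\gamma)\mathbf u^\circ$ and check that
\[
(y^+,\mathbf u^+,\gamma y^++(1-\gamma)y^\circ)=\gamma(\bar y,\bar{\mathbf u},y^+)+(1-\gamma)(y^\circ,\mathbf u^\circ,y^\circ).
\]
The first two components are immediate. For the third, I use $\gamma y^++(1-\gamma)y^\circ=\gamma y^+ + (1-\gamma)y^\circ$, which is exactly the combination of the third components. Both points on the right lie in $\widetilde{\mathbb S}$, so convexity of $\widetilde{\mathbb S}$ makes $\mathbf u^+$ feasible in~\eqref{eq:terminal_problem} at $y^+$.

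Moreover $(y^+-y^\circ,\mathbf u^+-\mathbf u^\circ)=\gamma(\bar y-y^\circ,\bar{\mathbf u}-\mathbf u^\circ)$. Writing $\delta\coloneqq(\bar y-y^\circ,\bar{\mathbf u}-\mathbf u^\circ)$, this gives $T(y^+)\le\gamma^2\|\delta\|_P^2$. Combining the two bounds,
\[
T(y^+)+V(\bar y,y^+)\le \|\delta\|^2_{Q+\gamma^2P}\le\|\delta\|_P^2\le T(\bar y)+\varepsilon,
\]
where the middle inequality uses $Q+\gamma^2P\preceq P$. The minimum over $y^+$ in~\eqref{eq:descent_condition} is no larger than the value at this candidate, and letting $\varepsilon\to0$ concludes.

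The only delicate point I anticipate is the bookkeeping that the convex combination reproduces exactly the contracted terminal target. Attainment of the minimum is handled by the $\varepsilon$ argument, so the proof does not depend on it.
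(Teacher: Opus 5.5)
Your proof is correct and follows the paper's argument. It uses the same candidate $y^+=\gamma\bar y+(1-\gamma)y^\circ$ with $\mathbf u^+=\gamma\bar{\mathbf u}+(1-\gamma)\mathbf u^\circ$, the same bounds $V(\bar y,y^+)\le\|\delta\|_Q^2$ and $T(y^+)\le\gamma^2\|\delta\|_P^2$, and concludes with $Q+\gamma^2P\preceq P$. The differences are minor: you certify feasibility of $\mathbf u^+$ through convexity of $\widetilde{\mathbb S}$ rather than componentwise convexity of $\widetilde f_F$, which also covers the linear constraints the paper leaves implicit; and your $\varepsilon$-optimal choice of $\bar{\mathbf u}$ avoids relying on attainment of the minimum in \eqref{eq:terminal_problem}, which the paper simply asserts.
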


\begin{pf} We first note that for $y = y^\circ$, the feasible set of \eqref{eq:terminal_problem} coincides with that of \eqref{eq:OptRCI_set}; then, given the choice of $L_N$, $\mathbf{u} = \mathbf{u}^\circ$ solves \eqref{eq:terminal_problem}, implying \textit{(i)}. 
Now, for \textit{(ii)}, let \(\bar{\mathbf{u}}\) be the minimizer of \eqref{eq:terminal_problem} at $y=\bar{y}$ (which exists since $\bar{y}\in\Omega$).
Set $\bar{y}^+ \coloneqq \gamma\bar{y}+(1-\gamma)y^\circ$. Notice that by feasibility 
of~\eqref{eq:terminal_problem}, $y=\bar{y}$, $\mathbf{u}=\bar{\mathbf{u}}$ and $y^+=\bar{y}^+$ must be also feasible for \eqref{eq:cost_to_travel_b}. Therefore,
\begin{equation}\label{eq:lhs_P2}
    V(\bar{y},\bar{y}^+) \leq  \norm{(\bar{y}-y^\circ, \bar{\mathbf{u}}-\mathbf{u}^\circ)}_Q^2;
\end{equation}
moreover, it follows from~\eqref{eq:S_tilde_definition} that
\begin{equation}\label{eq:thm1_proof_a}
\widetilde{f}_F(W_j\bar{y},U_j\bar{\mathbf{u}},\theta_k) + d \ \le\ \bar{y}^+
\end{equation}
holds for all $(j,k)\in\iv{1,\ve}\times\iv{1,\pe}$. 
Let $\bar{\mathbf{u}}^+ := \gamma\bar{\mathbf{u}} + (1-\gamma)\mathbf{u}^\circ$. Using the definitions of $\bar{y}^+$ and $\bar{\mathbf{u}}^+$, and convexity of $\widetilde{f}_F$, we obtain
\begin{align*}
\widetilde{f}_F\big(W_j\bar{y}^+,{}& U_j\bar{\mathbf{u}}^+,\theta_k\big)+d \\
&\leq \gamma\big(\widetilde{f}_F(W_j\bar{y},U_j\bar{\mathbf{u}},\theta_k)+d\big)\\
& \quad +(1-\gamma)\big(\widetilde{f}_F(W_j y^\circ,U_j \mathbf{u}^\circ,\theta_k)+d\big)\\ 
&\leq \gamma \bar{y}^+ + (1-\gamma) y^\circ,   \quad \forall(j,k)\in\iv{1,\ve}\times\iv{1,\pe},
\end{align*}
where the last inequality follows from \eqref{eq:thm1_proof_a} and a similar application of \eqref{eq:S_tilde_definition} to \eqref{eq:OptRCI_set}.
Thus, \(\mathbf{u}=\bar{\mathbf{u}}^+\) is feasible for \eqref{eq:terminal_problem} at \(y=\bar{y}^+\), and
\begin{align*}
    T(\bar{y}^+)  &\le \| \begin{pmatrix}\bar{y}^+\!-y^\circ, \bar{\mathbf{u}}^+\!-\mathbf{u}^\circ \end{pmatrix}\|_P^2 \\
    & = \|\gamma\begin{pmatrix}\bar{y}-y^\circ, \bar{\mathbf{u}}-\mathbf{u}^\circ\end{pmatrix}\|_P^2\\
&= \gamma^2 \|\begin{pmatrix}\bar{y}-y^\circ, \bar{\mathbf{u}}-\mathbf{u}^\circ\end{pmatrix}\|_P^2.
\end{align*}
Finally, combining the latter with \eqref{eq:lhs_P2} we obtain 
\begin{align*}
\min_{y^+} \ & T({y}^+) + V(\bar{y},{y}^+) \\
  &\leq 
    T(\bar{y}^+) + V(\bar{y},\bar{y}^+) \\
    &\leq \gamma^2 \|\begin{pmatrix}\bar{y}-y^\circ, \bar{\mathbf{u}}-\mathbf{u}^\circ\end{pmatrix}\|_P^2 + \norm{(\bar{y}-y^\circ, \bar{\mathbf{u}}-\mathbf{u}^\circ)}_Q^2 \\
    & \leq \norm{(\bar{y}-y^\circ,\bar{\mathbf{u}}-\mathbf{u}^\circ)}_P^2 = T(\bar{y}),
\end{align*}
where the last inequality follows from \(Q+\gamma^2 P\preceq P\). This recovers \eqref{eq:descent_condition} and concludes the proof.
\end{pf}

\subsection{Closed-Loop Control Law}

Denote the optimizers of~\eqref{eq:CCTMPC_new} by \(\{y_k^*(x)\}_{k=0}^{N}\) and \(\{\mathbf u_k^*(x)\}_{k=0}^{N}\). Since \(Fx\leq y_0^*(x)\), we have \(x\in X(y_0^*(x))\), and therefore there exists \(\lambda(x)\in\Delta^{\ve-1}\) such that
\[
x=\sum_{j=1}^{\ve}\lambda_j(x)W_jy_0^*(x).
\]
By Lemma~\ref{lem:RCT_CC_DC}, any such interpolation can be used to construct an admissible feedback input preserving the certified one-step robust inclusion (given the existence of the feasible vertex sequence $\{\mathbf u_k^*(x)\}_{k=0}^{N}$). Since the representation is not unique, any selection policy satisfying the simplex and reconstruction constraints is admissible. Here, we select the one yielding the smallest control effort through the QP
\begin{equation}
\label{eq:lambda_selection}
\begin{aligned}
\lambda^*(x)\in\arg\min_{\lambda\in\Delta^{\ve-1}}\quad &
\Bigg\|\sum_{j=1}^{\ve}\lambda_jU_j\mathbf u_0^*(x)\Bigg\|_R^2\\
\text{s.t.}\quad &
x=\sum_{j=1}^{\ve}\lambda_jW_jy_0^*(x),
\end{aligned}
\end{equation}
with \(R\succ0\). Thus, we define the feedback law as
\begin{equation}
\label{eq:MPC_control_law}
\mu(x):=\sum_{j=1}^{\ve}\lambda_j^*(x)U_j\mathbf u_0^*(x).
\end{equation}
Hence, the dynamics~\eqref{eq:sys} in closed-loop read as
\begin{equation}\label{eq:closed-loop}
x_{t+1}=f(x_t,\mu(x_t),\theta_t)+w_t,
\end{equation}
with $\theta_t \in \Theta$, $w_t \in \mathbb{W}$. For \(x\in\mathbb X_N\), let \(\mathcal O(x)\) denote the optimal value of Problem~\eqref{eq:CCTMPC_new}. We are now ready to establish recursive feasibility and robust convergence.

\begin{thm}
\label{thm::stability}
Let \(Q\succ0\), \(P\succ0\), and \(\gamma\in[0,1)\) satisfy \(Q+\gamma^2P\preceq P\). Suppose that \((y^\circ,\mathbf u^\circ,y^\circ)\in\widetilde{\mathbb S}\), and that \(x_0\in\mathbb X_N\). Then, the closed-loop dynamics~\eqref{eq:closed-loop} satisfy the following properties:
\begin{enumerate}[label=(\roman*)]
\item Problem~\eqref{eq:CCTMPC_new} remains feasible for all \(t\in\mathbb N\);
\item \(\lim_{t\to\infty}\mathcal{O}(x_t)=0\), and
\[
\lim_{t\to\infty}(y_k^*(x_t),\bfu_k^*(x_t))=(y^\circ,\bfu^\circ),\qquad k\in\iv{0,N}.
\]
\end{enumerate}
\end{thm}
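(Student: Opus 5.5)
The proof follows the standard shifted-candidate argument for tube MPC, carried out in the parameter space $(y,\bfu)$ and using the reformulation \eqref{eq:CCTMPC_CTT} together with Lemma~\ref{lem:terminal_decrease}.

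\textbf{Recursive feasibility.} Let $x_t\in\mathbb X_N$ with optimizers $\{y_k^*(x_t)\}$ and $\{\bfu_k^*(x_t)\}$. The constraint $(y_0^*,\bfu_0^*,y_1^*)\in\widetilde{\mathbb S}$ and Lemma~\ref{lem:RCT_CC_DC} show that the interpolated input $\mu(x_t)$ in \eqref{eq:MPC_control_law} is admissible. For every $\theta_t\in\Theta$ and $w_t\in\mathbb W$ it then gives $x_{t+1}\in X(y_1^*(x_t))$, i.e.\ $Fx_{t+1}\le y_1^*(x_t)$. The proof of Lemma~\ref{lem:RCT_CC_DC} uses an arbitrary $\lambda\in\Delta^{\ve-1}$, so the particular choice $\lambda^*(x_t)$ from \eqref{eq:lambda_selection} is covered.

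At time $t+1$ I take the shifted candidate
\[
\{y_1^*,\ldots,y_N^*,\bar y^+\},\qquad \{\bfu_1^*,\ldots,\bfu_N^*,\bar\bfu^+\},
\]
where $\bar y^+=\gamma y_N^*+(1-\gamma)y^\circ$ and $\bar\bfu^+=\gamma\bfu_N^*+(1-\gamma)\bfu^\circ$. I then check the constraints of \eqref{eq:CCTMPC_new}:
\begin{itemize}
\item the initial constraint \eqref{eq:CCTMPC_init_constr} holds because $Fx_{t+1}\le y_1^*$;
\item the tube constraints \eqref{eq:CCTMPC_tube_constr} for $k\in\iv{0,N\!-\!2}$ are inherited from the previous solution;
\item the new last tube link $(y_N^*,\bfu_N^*,\bar y^+)\in\widetilde{\mathbb S}$ is exactly the old terminal constraint \eqref{eq:CCTMPC_term_constr};
\item the new terminal constraint $(\bar y^+,\bar\bfu^+,\gamma\bar y^++(1-\gamma)y^\circ)\in\widetilde{\mathbb S}$ is the convexity argument in the proof of Lemma~\ref{lem:terminal_decrease}(ii).
\end{itemize}
That convexity argument uses $(y^\circ,\bfu^\circ,y^\circ)\in\widetilde{\mathbb S}$ and convexity of $\widetilde f_F$ for the propagation inequalities. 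I will also verify the linear constraints $Ey\le0$, $W_jy\in\mathbb X$ and $U_j\bfu\in\mathbb U$ for the convex combination; these hold by convexity of $\mathbb X$, $\mathbb U$ and the cone $\mathcal E$. This is the point I expect to need the most care, since the lemma's proof only states the propagation part explicitly. Induction on $t$ gives (i).

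\textbf{Cost decrease.} Writing the value of \eqref{eq:CCTMPC_new} as in \eqref{eq:CCTMPC_CTT}, the candidate cost equals $\mathcal O(x_t)-V(y_0^*,y_1^*)-T(y_N^*)+V(y_N^*,\bar y^+)+T(\bar y^+)$. More precisely, I bound the stage cost with the actual $\bfu$'s, $L(y_0^*,\bfu_0^*)$ etc. Lemma~\ref{lem:terminal_decrease}(ii), in the chained inequality of its proof, gives $L(y_N^*,\bfu_N^*)+L_N(\bar y^+,\bar\bfu^+)\le L_N(y_N^*,\bfu_N^*)$. Here the inequality must be applied with $\bfu_N^*$, the optimizer inside the MPC problem, rather than the minimizer of $T$; this works because the proof's algebra only uses feasibility of that $\bfu$. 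Hence
\[
\mathcal O(x_{t+1})\le \mathcal O(x_t)-L(y_0^*(x_t),\bfu_0^*(x_t)).
\]

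\textbf{Convergence.} The sequence $\mathcal O(x_t)$ is nonnegative and nonincreasing, hence convergent. Summing the decrease inequality gives $\sum_t L(y_0^*(x_t),\bfu_0^*(x_t))<\infty$, so $(y_0^*,\bfu_0^*)\to(y^\circ,\bfu^\circ)$ because $Q\succ0$.

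To upgrade this to $\mathcal O(x_t)\to0$ and convergence of every stage $k$, I would iterate the decrease $N+1$ times. Along the shifted candidates, each stage $k$ of the solution at time $t$ is compared with stage $0$ at time $t+k$. A cleaner route is a telescoping bound: $\mathcal O(x_t)-\mathcal O(x_{t+N+1})$ dominates a sum of stage costs, but that involves different optimizers at different times.

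I expect this last step to be the main obstacle. My first attempt is to show $\mathcal O(x_t)\le c\,\|(y_0^*(x_t)-y^\circ,\bfu_0^*(x_t)-\bfu^\circ)\|^2$ plus a vanishing term, using a candidate built from the time-$t$ first element. If that fails, I would use the standard argument that the limit $\mathcal O_\infty$ of the optimal costs must satisfy $\mathcal O_\infty=\mathcal O_\infty-\lim L$ along all shifts. The individual stage terms of the optimal sequence at time $t$ are then bounded by $\mathcal O(x_t)-\mathcal O(x_{t+N+1})\to0$ after one verifies, via the candidate, that the optimizer's stage-$k$ cost controls the stage-$0$ cost $k$ steps later. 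Once $\mathcal O(x_t)\to0$ is established, positive definiteness of $Q$ and $P$ gives convergence of all $(y_k^*,\bfu_k^*)$ to $(y^\circ,\bfu^\circ)$.
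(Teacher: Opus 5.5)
Your recursive-feasibility argument and the decrease inequality $\mathcal O(x_{t+1})\le\mathcal O(x_t)-L(y_0^*(x_t),\bfu_0^*(x_t))$ follow the paper's proof essentially verbatim. The shifted candidate is the same. For the new terminal constraint, the paper writes the triple as $\gamma(y_N^*,\bfu_N^*,\hat y_N)+(1-\gamma)(y^\circ,\bfu^\circ,y^\circ)$ and invokes convexity of $\widetilde{\mathbb S}$, which also disposes of the linear constraints you flagged.

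The genuine gap is the final step. What you actually prove is $\sum_t L(y_0^*(x_t),\bfu_0^*(x_t))<\infty$, i.e.\ convergence of the \emph{first} tube section only. The claims $\lim_{t\to\infty}\mathcal O(x_t)=0$ and convergence of $(y_k^*,\bfu_k^*)$ for $k\ge1$ are left open, and your fallbacks do not close them. Multi-step shifted candidates are not available, because after one step the controller re-optimizes: $x_{t+2}$ is only certified to lie in $X(y_1^*(x_{t+1}))$, not in $X(y_2^*(x_t))$. Hence the only telescoped bound is $\mathcal O(x_t)-\mathcal O(x_{t+N+1})\ge\sum_{i=0}^{N}L(y_0^*(x_{t+i}),\bfu_0^*(x_{t+i}))$. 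This involves stage-$0$ terms of \emph{different} optimizers and gives no control over the stage-$k$ terms of the time-$t$ optimizer; no inequality relates the two. Passing to the limit in the decrease inequality likewise only returns $L(y_0^*,\bfu_0^*)\to0$.

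What is missing is an upper bound on the optimal value near the target, i.e.\ a continuity property at $y^\circ$ of $J(y_0)$, the optimal value of \eqref{eq:CCTMPC_new} with $y_0$ fixed. Your first idea points in that direction, but the obvious candidates violate $Fx_t\le y_0$: the stationary sequence $(y^\circ,\bfu^\circ)$ does, and so do convex combinations of it with the optimizer. Moreover, a convex value function need not be upper semicontinuous at a boundary point of a non-polyhedral domain.

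The paper bridges this step in a single sentence, asserting that $(y_0^*,\bfu_0^*)\to(y^\circ,\bfu^\circ)$ gives $\mathcal O(x_t)\to0$. That implication is standard when $\widetilde{\mathbb S}$ is polyhedral, as in linear CC-TMPC, but it is not argued here, so you would have to supply it yourself. One way to do so:
\begin{itemize}
\item By the proof of Lemma~\ref{lem:RCT_CC_DC}, $Fx_{t+1}\le\phi_t$, where $\phi_t$ is the componentwise maximum over $(j,k)$ of $\widetilde f_F(W_jy_0^*(x_t),U_j\bfu_0^*(x_t),\theta_k)+d$.
\item Hence $\mathcal O(x_{t+1})\le\psi(\phi_t)$ with $\psi(z):=\inf_{y_0\ge z}J(y_0)$. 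This function is convex, nondecreasing, and zero on $\{z\le y^\circ\}$.
\item By continuity of $\widetilde f_F$, $\phi_t\to\phi^\circ\le y^\circ$, where $\phi^\circ$ is the same maximum evaluated at $(y^\circ,\bfu^\circ)$.
\item Suppose some $y_0'$ with $J(y_0')<\infty$ satisfies $y_0'>\phi^\circ$ componentwise. This holds, e.g., for a strictly RCI target, or for a feasible strict enlargement of $X(y^\circ)$. Then $\psi$ is finite on a neighbourhood of $\phi^\circ$, hence continuous there, so $\mathcal O(x_t)\to0$.
\item Finally, $Q,P\succ0$ gives convergence of every stage.
\end{itemize}
Without some such interiority condition, the passage from stage-$0$ convergence to $\mathcal O(x_t)\to0$ is not justified by the facts established so far.
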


\begin{pf}
$(i)$ 
Let \(\{y_k^*(x_t),\mathbf u_k^*(x_t)\}_{k=0}^{N}\) be an optimizer of~\eqref{eq:CCTMPC_new}, for $x=x_t$, and define the shifted candidate sequence as
\begin{subequations}\label{eq:shifted_seqs}
\begin{align} 
&\!\left\{
\begin{aligned}
\hat y_k &=y_{k+1}^*(x_t),\\
\hat{\mathbf u}_k &=\mathbf u_{k+1}^*(x_t),
\end{aligned}\right.  &&k\in\iv{0,N\!-\!1}, \label{eq:shifted_sequence}\\[1mm] 
&\!\left\{
\begin{aligned}
  \hat y_N &=\gamma y_N^*(x_t)+(1-\gamma)y^\circ,\\ \hat{\mathbf u}_N &=\gamma\mathbf u_N^*(x_t)+(1-\gamma)\mathbf u^\circ.  
\end{aligned}\right.
\label{eq:shifted_terminal_pair} 
\end{align}
\end{subequations}
Since \(x_t\in X(y_0^*(x_t))\) and \((y_0^*(x_t),\mathbf u_0^*(x_t),y_1^*(x_t))\in\widetilde{\mathbb S}\), Lemma~\ref{lem:RCT_CC_DC} with~\eqref{eq:MPC_control_law} imply $X(y_1^*(x_t))\in\mathcal{F}(X(y_0^*(x_t)))$, hence \(x_{t+1}\in X(y_1^*(x_t))\) (or, equivalently, \(Fx_{t+1}\le y_1^*(x_t)\)).
Then, with the choice in \eqref{eq:shifted_seqs}, constraint \eqref{eq:CCTMPC_init_constr} holds for $x = x_{t+1}$, since $x_{t+1}\in X(\hat{y}_0)$. 
The tube constraints \eqref{eq:CCTMPC_tube_constr} for \(k\in\iv{0,N\!-\!2}\) are inherited from the previous optimal sequence; for $k=N-1$, \eqref{eq:CCTMPC_tube_constr} follows from~\eqref{eq:shifted_terminal_pair} and~\eqref{eq:CCTMPC_term_constr}.
Moreover, \begin{multline} (\hat y_N,\hat{\mathbf u}_N, \gamma\hat y_N+(1-\gamma)y^\circ) =\\ \gamma (y_N^*(x_t),\mathbf u_N^*(x_t),\hat y_N) + (1-\gamma) (y^\circ,\mathbf u^\circ,y^\circ) \in\widetilde{\mathbb S}, \end{multline}
where the last inclusion is due to \eqref{eq:CCTMPC_term_constr} and convexity of \(\widetilde{\mathbb S}\). Thus, given feasibility of \eqref{eq:CCTMPC_new} at $x = x_t$, $t\in\mathbb{N}$, the shifted sequence is feasible at \(x = x_{t+1}\), proving recursive feasibility.

$(ii)$
Evaluating the cost function in~\eqref{eq:CCTMPC_CTT_a} using the shifted candidate gives the upper bound (note that \eqref{eq:CCTMPC_CTT_a} is the same as \eqref{eq:CCTMPC_cost})
\[
\scalemath{0.87}{
\mathcal O(x_{t+1})
\le
\sum_{k=1}^{N-1}V(y_k^*(x_t),y_{k+1}^*(x_t))
+V(y_N^*(x_t),\hat y_N)+T(\hat y_N).
}
\]
By Lemma~\ref{lem:terminal_decrease}, $V(y_N^*(x_t),\hat y_N)+T(\hat y_N) \le T(y_N^*(x_t))$,
therefore
\[
\mathcal O(x_{t+1})
\le
\mathcal O(x_t)-V(y_0^*(x_t),y_1^*(x_t)).
\]
By definition of $V$ in \eqref{eq:cost_to_travel}, and since \(\mathcal O(x)\ge0\), $\forall x$, the above implies that \(\mathcal O\) is nonincreasing and convergent along the system's trajectory (due to continuity of $L$ and $L_N$). Hence
\[
\lim_{t\to\infty}V(y_0^*(x_t),y_1^*(x_t))=0,
\]
which in turn, by definition of \(L(y,\bfu)\), implies
\[
(y_0^*(x_t),\mathbf u_0^*(x_t))\to(y^\circ,\mathbf u^\circ),
\]
and $\lim_{t\to\infty}\mathcal O(x_t)=0$.
From this (and $Q,P\succ0$) it follows that all other terms in~\eqref{eq:CCTMPC_cost} must vanish asymptotically, hence
\[
\lim_{t\to\infty}(y_k^*(x_t),\mathbf u_k^*(x_t))=(y^\circ,\mathbf u^\circ),\qquad \forall k\in\iv{0,N}.
\]
In other words, since $x_t\in X(y_0^*(x_t))$, the optimized tube section containing the closed-loop state trajectory converges to \(X(y^\circ)\). 
\end{pf}

\section{Numerical Example}
\label{sec:example}

We consider the forward-Euler discretization of the uncertain Duffing oscillator
\begin{equation}
\label{eq:duffing}
\begin{aligned} x_{t+1} &= x_t+T_s\left( f_{\mathrm{ct}}(x_t,u_t,\theta_t) + \begin{bmatrix}0\\w_t\end{bmatrix} \right),\\[1mm] f_{\mathrm{ct}}(x,u,\theta) &= \begin{bmatrix} x_2\\ -\delta x_2-\theta_1x_1-\beta x_1^3+\theta_2u \end{bmatrix}, \end{aligned} \end{equation}
where \(T_s=0.2\,\mathrm{s}\) is the sampling time, and \(\delta=0.2\), \(\beta=0.5\) are fixed model parameters. At each sampling instant, the uncertain parameters and additive disturbance satisfy $\theta_t\in\Theta=[0.8,1.2]\times[0.9,1.1]$ and $w_t\in\mathbb W=[-0.25,0.25]$, while the state and input constraints are $\mathbb X=[-2,0.6]\times[-2,2]$, $\mathbb U=[-1,1]$. 

The directional bounds are constructed around \((\bar x,\bar u,\bar\theta)=((0,0),0,(1,1))\). The uncertain bilinear terms are decomposed using the identity \(ab=\frac14(a+b)^2-\frac14(a-b)^2\); for the cubic restoring force, we use
\[
-\beta x_1^3=
\left(-\beta x_1^3+\frac{\rho_c}{2}x_1^2\right)-\frac{\rho_c}{2}x_1^2,
\quad
\rho_c=6\beta x_{1,\max}=1.8,
\]
where $\rho_c$ is the smallest constant ensuring convexity over the constrained domain. A continuous-time DC decomposition \(f_{\mathrm{ct}}=g_{\mathrm{ct}}-h_{\mathrm{ct}}\) is therefore obtained with
\begin{align*}
&\scalemath{0.85}{g_{\mathrm{ct}}(x,u,\theta) = \begin{bmatrix}
    x_2 \\
    -\delta x_2+\frac14(\theta_1-x_1)^2+\frac14(\theta_2+u)^2-\beta x_1^3+\frac{\rho_c}{2}x_1^2
\end{bmatrix},}\\
&\scalemath{0.85}{h_{\mathrm{ct}}(x,u,\theta) = \begin{bmatrix}
    0 \\
    \frac14(\theta_1+x_1)^2+\frac14(\theta_2-u)^2+\frac{\rho_c}{2}x_1^2
\end{bmatrix}.}
\end{align*}
\normalsize
The corresponding discrete-time DC maps are
\begin{displaymath}
\scalemath{0.9}{
g_d(x,u,\theta)=x + T_s g_{\mathrm{ct}}(x,u,\theta), \quad h_d(x,u,\theta)=T_sh_{\mathrm{ct}}(x,u,\theta),}
\end{displaymath}
and the additive disturbance set in the discrete-time model is \(\{0\}\times T_s\mathbb W\).

The seed template is a regular dodecagon with unit offsets and facet normals defined as
\[
\bar F_l=
\begin{bmatrix}
\cos\varphi_l & \sin\varphi_l
\end{bmatrix},
\;
\varphi_l=\frac{\pi+4\pi(l-1)}{24},
\quad
l\in\iv{1,12}.
\]
The sign-preserving transformation in Appendix~\ref{app:template_dc} yields \(F=\bar FT\), where
\[
T=
\begin{bmatrix}
1.8275 & 0.4225\\
0.1765 & 1.4528
\end{bmatrix}.
\]
The resulting configuration-constrained family \(X(y)=\{x\mid Fx\le y\}\) has \(\fe=\ve=\ee=12\).

The target pair \((y^\circ,\mathbf u^\circ)\) is computed from~\eqref{eq:OptRCI_set} using \(\omega(y,\mathbf u) = 100 \|y\|_2^2+\|\mathbf u\|_2^2\). For the online regulation cost, define \( \alpha_j\coloneqq \operatorname{blkd}\!\left( W_j-\ve^{-1}\sum_{i=1}^{\ve}W_i,\, U_j-\ve^{-1}\sum_{i=1}^{\ve}U_i \right)\); then, the stage-cost matrix is selected as 
\( Q=\sum_{j=1}^{\ve} \alpha_j^\top\operatorname{blkd}(10\I_2,1)\alpha_j+\I_{24}
\), and the terminal weight is \(P=(1-\gamma^2)^{-1}Q\), with \(\gamma=0.98\). The prediction horizon is \(N=3\). The feasible set \(\mathbb X_3\) is approximated using 100 support directions, and the convex hull of the resulting support points defines the feasible-region approximation \(\mathcal R(3)\). The relative area gap between the corresponding inner and outer approximations is \(0.223\%\). Six initial conditions are selected near the boundary of \(\mathcal R(3)\). Figure~\ref{fig:STATE_SPACE} reports the resulting closed-loop trajectories, together with the target RCI set and the optimized first tube sections along one representative trajectory. The tube sections contain the corresponding closed-loop state and contract toward \(X(y^\circ)\).

\begin{figure}
\centering
\includegraphics[width=\linewidth]{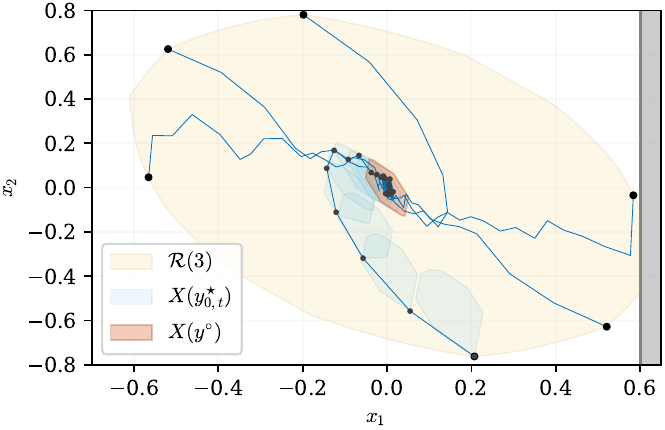}
\caption{Closed-loop trajectories, feasible-region approximation \(\mathcal R(3)\), target RCI set \(X(y^\circ)\), and optimized first tube sections \(X(y_{0,t}^\star)\) along one representative trajectory.}
\label{fig:STATE_SPACE}
\end{figure}

Figure~\ref{fig:LYAP} reports the optimal MPC cost along the same trajectories. Its decrease (up to the numerical tolerance) is consistent with the convergence result in Theorem~\ref{thm::stability}.

\subsection{Computational Aspects}
\label{sec:computational_aspects}

The experiments were coded in Python\footnote{Code is available at \url{https://github.com/fil-bad/NonlinearCCTMPC}.} and performed on an Intel Core i5-8350U CPU. The offline template transformation is modeled with CasADi and solved through IPOPT~\cite{ipopt_waechter2006implementation}. The target RCI, support-function, and online CC-TMPC problems are implemented in CVXPY~\cite{cvxpy} following disciplined convex programming (DCP) and disciplined parametrized programming (DPP) rules, and solved with Clarabel~\cite{goulart2026clarabel}. The barycentric QP in~\eqref{eq:lambda_selection} is solved with DAQP~\cite{arnstrom2022dual}.

Problem~\eqref{eq:CCTMPC_new} contains $(N+1)(\fe+n_u\ve)$ decision variables. Each constraint set $\widetilde {\mathbb{S}}$ comprises \(\ee+\ve\left(n_{\mathbb X}+n_{\mathbb U}+\pe\fe\right)\) scalar inequalities, where $n_{\mathbb X}$ and $n_{\mathbb U}$ are the numbers of half-spaces defining the state and input sets. These counts coincide with those of linear CC-TMPC for a polytopic LDI, with \(\pe\) corresponding to the number of vertex models; the propagation constraints are, however, general convex rather than affine. The barycentric QP has \(\ve\) variables, \(n_x+1\) equality constraints, and \(\ve\) nonnegativity constraints. DAQP solves this QP in a few microseconds, while its complete evaluation through the CVXPY interface requires \(\approx2\,\mathrm{ms}\). Before recording the online timings, a one-time warm-up solve at the target RCI compiles the DPP representation and initializes the conic solver. This requires approximately \(4.46\,\mathrm{s}\) for the dodecagonal template and is excluded from the reported timings. 

To assess the trade-off between feasible-region size and online complexity, the experiment is repeated for \(\fe=\ve\in\{6,8,10,12,16,20\}\), with \(N=3\) and 100 support directions. Figure~\ref{fig:LYAP_SCALABILITY}\subref{fig:SCALABILITY} reports the median end-to-end computation time and the area of \(\mathcal R(3)\). The observed 95th-percentile solution time increases from \(29.3\,\mathrm{ms}\) for \(\ve=6\) to \(226.4\,\mathrm{ms}\) for \(\ve=20\). Increasing the number of vertices enlarges the feasible region, but yields diminishing geometric returns and a superlinear increase in computation time. For this example, templates with 10 or 12 vertices provide the most favorable trade-off.

\begin{figure}
\centering
\begin{subfigure}[t]{0.49\linewidth}
    \centering
    \includegraphics[width=\linewidth]{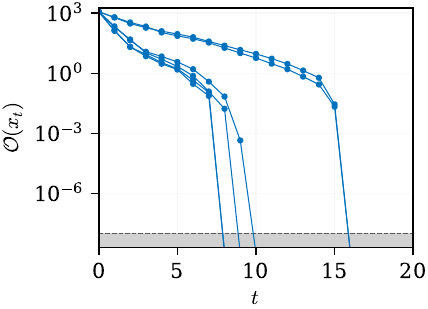}
    \caption{}
    \label{fig:LYAP}
\end{subfigure}
\hfill
\begin{subfigure}[t]{0.49\linewidth}
    \centering
    \includegraphics[width=\linewidth]{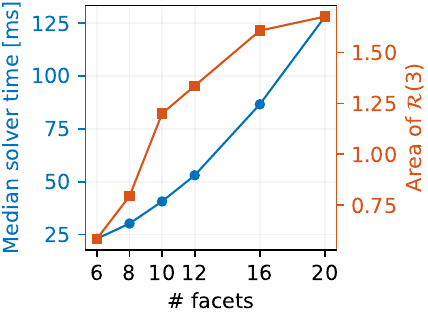}
    \caption{}
    \label{fig:SCALABILITY}
\end{subfigure}
\caption{Closed-loop convergence and computational scalability. Panel~(\subref{fig:LYAP}) shows the optimal MPC cost, while panel~(\subref{fig:SCALABILITY}) compares the median online computation time with the area of the feasible-region approximation.}
\label{fig:LYAP_SCALABILITY}
\end{figure}

\section{Conclusions}
\label{sec:conclusions}

This paper extended configuration-constrained tube MPC to uncertain nonlinear systems admitting a DC decomposition. Convex directional bounds of the dynamics and the affine vertex representation of configuration-constrained polytopes yield a finite-dimensional robust propagation certificate. The resulting controller optimizes a flexible polytopic tube and its vertex inputs within a single convex program, with recursive feasibility and convergence to a target robust control invariant set guaranteed by an implicit terminal condition.

Future work will consider nonlinear reference tracking through artificial reference variables and robust tracking tubes~\cite{8270613,polver2025robust}, and a more systematic analysis of restrictions on the tube geometry and vertex control law to balance conservatism and online complexity~\cite{badalamenti2025_efficientCCTMPC}.


\appendix
\section{Sign-Preserving Template Transformation}
\label{app:template_dc}

The feasibility and conservatism of the one-step certificate depend on the selected template normals. In the DC setting, their signs also determine the branches of the directional upper bound in Lemma~\ref{lem:c-bound}. We therefore introduce an offline transformation that adapts a seed template to the system dynamics while preserving its sign pattern. The transformation may be optimized according to different geometric criteria; in the numerical example, it is chosen to maximize the volume of the resulting RCI set.

Fix $\bar F$ and $\bar y$ such that
\[
\bar X
=
\{x\in\mathbb R^{n_x}\mid \bar F x\le\bar y\}
=
\operatorname{conv}\{\bar{x}_j\}_{j\in\iv{1,\ve}}
\]
expresses a full-dimensional seed polytope. For an invertible matrix
\(T\in\mathbb R^{n_x\times n_x}\), consider the transformed polytope
\begin{equation}\label{eq:transf_X}
X = \{x\in\mathbb R^{n_x}\mid \bar FTx\le\bar y\} = T^{-1}\bar X.
\end{equation}
Thus, any vertex \(x_j\), $j\in\iv{1,\ve}$, satisfies \(Tx_j=\bar{x}_j\).
For each facet \(l\in\iv{1,\fe}\), define the sign partition
\[
\mathcal I_l^+
=
\mathcal I^+(\bar F_l^\top),
\qquad
\mathcal I_l^-
=
\mathcal I^-(\bar F_l^\top),
\]
according to~\eqref{eq:sign_set_defn}. The transformation is restricted to the corresponding sign by imposing
\begin{equation}
\label{eq:app_sign_chamber}
(\bar{F}_l T)_i\ge 0,\quad i\in\mathcal I_l^+,\qquad (\bar{F}_l T)_i<0,\quad i\in\mathcal I_l^-,
\end{equation}
for all \(l\in\iv{1,\fe}\), with strict inequalities enforced by a small numerical margin \(\varepsilon_s>0\) in the implementation. Consequently, the transformed facet directions use the same sign partitions of the DC upper bound as the seed normals.

For a given transformation \(T\), define
\[
\widetilde f_{\bar F T}(x,u,\theta)
\coloneqq
\begin{bmatrix}
\widetilde f((x,u,\theta);\bar z,(\bar F_1T)^\top)\\
\vdots\\
\widetilde f((x,u,\theta);\bar z,(\bar F_{\fe}T)^\top)
\end{bmatrix}.
\]
Let \(\theta_k\), $k\in\iv{1,\pe}$, and
$w_q$, $q\in\iv{1,\we}$, denote the vertices of \(\Theta\) and \(\mathbb W\), respectively. The support of the additive disturbance along the transformed facet directions is upper-bounded by \(d\in\mathbb R^{\fe}\) satisfying
\[
d_l\ge \bar F_l T w_q,
\qquad
(l,q)\in\iv{1,\fe}\times\iv{1,\we}.
\]

A RCI realization of the transformed template is obtained from the offline NLP
\begin{equation}
\label{eq:app_offline_template_dc}
\scalemath{0.9}{
\begin{aligned}
\min_{T,\{x_j\},\mathbf u,d}\quad
& \Phi(T)\\
\mathrm{s.t.}\quad
& Tx_j=\bar x_j,
&& j\in\iv{1,\ve},\\
& (\bar F_l T)_i\ge0,
&& i\in\mathcal I_l^+,\quad l\in\iv{1,\fe},\\
& (\bar F_l T)_i\le-\varepsilon_s,
&& i\in\mathcal I_l^-,\quad l\in\iv{1,\fe},\\
& d_l\ge \bar F_l T w_q,
&& (l,q)\in\iv{1,\fe} \times\iv{1,\we},\\
& \widetilde f_{\bar FT}
  (x_j,U_j\mathbf u,\theta_k)+d
  \le\bar y,
&& (j,k)\in\iv{1,\ve}\times\iv{1,\pe},\\
& x_j\in\mathbb X,\qquad
  U_j\mathbf u\in\mathbb U,
&& j\in\iv{1,\ve}.
\end{aligned}
}
\end{equation}
Any feasible solution of~\eqref{eq:app_offline_template_dc} certifies that the set \(X\) in \eqref{eq:transf_X} is robust control invariant. The objective \(\Phi\) can be selected according to the desired geometric properties of the resulting set; for instance, volume maximization can be cast as
\[
\Phi(T)\coloneqq \det T, \qquad\text{s.t.}\quad\det T\ge\varepsilon_{\det}>0.
\]
Since the offset vector \(\bar y\) is fixed and \(\det T>0\),
\[
\operatorname{vol}(X)
=
\frac{\operatorname{vol}(\bar X)}{\det T}.
\]
To avoid explicit determinant evaluation, one may instead use the facet-distance surrogate
\[
\Phi(T)
=
\left\|
\operatorname{diag}(\bar y)^{-1}\bar FT
\right\|_{\mathrm F}^2, \qquad\text{s.t.}\quad\bar y>0.
\]

\begin{rem}
Invertible linear transformations preserve the face lattice of a polytope (cf.~\cite[\S 2.6]{Ziegler2008-hp}). Hence, if
\( \left\langle
\bar F,\bar E,\bar{\boldsymbol{W}}\right\rangle \) is a configuration triple for \(\bar X\), then the transformed family admits the configuration triple
\[
F=\bar FT,
\qquad
E=\bar E,
\qquad
W_j=T^{-1}\bar W_j,
\quad
j\in\iv{1,\ve}.
\]
\end{rem}

\bibliographystyle{plain}        
\bibliography{new_refs}           

\begin{thebibliography}{10}

\bibitem{althoff2013reachability}
M.~Althoff.
\newblock Reachability analysis of nonlinear systems using conservative polynomialization and non-convex sets.
\newblock In {\em Proceedings of the 16th International Conference on Hybrid Systems: Computation and Control}, pages 173--182, 2013.

\bibitem{althoff2008reachability}
M.~Althoff, O.~Stursberg, and M.~Buss.
\newblock Reachability analysis of nonlinear systems with uncertain parameters using conservative linearization.
\newblock In {\em Proceedings of the 47th IEEE Conference on Decision and Control}, pages 4042--4048, 2008.

\bibitem{arnstrom2022dual}
D.~Arnstr{\"o}m, A.~Bemporad, and D.~Axehill.
\newblock A dual active-set solver for embedded quadratic programming using recursive {LDL}$^{T}$ updates.
\newblock {\em IEEE Transactions on Automatic Control}, 67(8):4362--4369, 2022.

\bibitem{badalamenti2025_efficientCCTMPC}
F.~Badalamenti, S.~K. Mulagaleti, M.~E. Villanueva, B.~Houska, and A.~Bemporad.
\newblock Efficient configuration-constrained tube {MPC} via variables restriction and template selection.
\newblock In {\em Proceedings of the 64th IEEE Conference on Decision and Control}, pages 1783--1789, 2025.

\bibitem{fb}
F.~Blanchini and S.~Miani.
\newblock {\em Set-Theoretic Methods in Control}.
\newblock Birkh{\"a}user, Boston, MA, 2015.

\bibitem{bravo2003robust}
J.~M. Bravo, T.~Alamo, D.~Lim{\'o}n, and E.~F. Camacho.
\newblock Robust mpc of constrained discrete-time nonlinear systems based on zonotopes.
\newblock In {\em 2003 European Control Conference (ECC)}, pages 2035--2040. IEEE, 2003.

\bibitem{bravo2005computation}
J.~M. Bravo, D.~Lim{\'o}n, T.~Alamo, and E.~F. Camacho.
\newblock On the computation of invariant sets for constrained nonlinear systems: An interval arithmetic approach.
\newblock {\em Automatica}, 41(9):1583--1589, 2005.

\bibitem{cvxpy}
S.~Diamond and S.~Boyd.
\newblock {CVXPY}: A {P}ython-embedded modeling language for convex optimization.
\newblock {\em Journal of Machine Learning Research}, 17(83):1--5, 2016.

\bibitem{DIEHL20081279}
M.~Diehl, J.~Gerhard, W.~Marquardt, and M.~M{\"o}nnigmann.
\newblock Numerical solution approaches for robust nonlinear optimal control problems.
\newblock {\em Computers \& Chemical Engineering}, 32(6):1279--1292, 2008.

\bibitem{doff2026computationally}
M.~Doff-Sotta, Z.~A-Rahman, and M.~Cannon.
\newblock Computationally tractable robust nonlinear model predictive control using {DC} programming.
\newblock {\em arXiv preprint arXiv:2602.01164}, 2026.

\bibitem{NLTMPC_9993390}
M.~Doff-Sotta and M.~Cannon.
\newblock Difference of convex functions in robust tube nonlinear {MPC}.
\newblock In {\em Proceedings of the 61st IEEE Conference on Decision and Control}, pages 3044--3050, 2022.

\bibitem{FIACCHINI2010}
M.~Fiacchini, T.~Alamo, and E.~F. Camacho.
\newblock On the computation of convex robust control invariant sets for nonlinear systems.
\newblock {\em Automatica}, 46(8):1334--1338, 2010.

\bibitem{FIACCHINI2012819}
M.~Fiacchini, T.~Alamo, and E.~F. Camacho.
\newblock Invariant sets computation for convex difference inclusion systems.
\newblock {\em Systems \& Control Letters}, 61(8):819--826, 2012.

\bibitem{goulart2026clarabel}
P.~J. Goulart and Y.~Chen.
\newblock Clarabel: An interior-point solver for conic programs with quadratic objectives.
\newblock {\em Mathematical Programming Computation}, 2026.

\bibitem{horst1999dc}
R.~Horst and N.~V. Thoai.
\newblock {DC} programming: Overview.
\newblock {\em Journal of Optimization Theory and Applications}, 103(1):1--43, 1999.

\bibitem{Kohler2021}
J.~K{\"o}hler, R.~Soloperto, M.~A. M{\"u}ller, and F.~Allg{\"o}wer.
\newblock A computationally efficient robust model predictive control framework for uncertain nonlinear systems.
\newblock {\em IEEE Transactions on Automatic Control}, 66(2):794--801, 2021.

\bibitem{Kolmanovsky1998}
I.~Kolmanovsky and E.~G. Gilbert.
\newblock Theory and computation of disturbance invariant sets for discrete-time linear systems.
\newblock {\em Mathematical Problems in Engineering}, 4(4):317--367, 1998.

\bibitem{Langson2004}
W.~Langson, I.~Chryssochoos, S.~V. Rakovi{\'c}, and D.~Q. Mayne.
\newblock Robust model predictive control using tubes.
\newblock {\em Automatica}, 40(1):125--133, 2004.

\bibitem{8270613}
D.~Limon, A.~Ferramosca, I.~Alvarado, and T.~Alamo.
\newblock Nonlinear {MPC} for tracking piece-wise constant reference signals.
\newblock {\em IEEE Transactions on Automatic Control}, 63(11):3735--3750, 2018.

\bibitem{Lishkova2025}
Y.~Lishkova and M.~Cannon.
\newblock A successive convexification approach for robust receding horizon control.
\newblock {\em IEEE Transactions on Automatic Control}, 70(10):6436--6448, 2025.

\bibitem{Lopez2019}
B.~T. Lopez, J.-J.~E. Slotine, and J.~P. How.
\newblock Dynamic tube {MPC} for nonlinear systems.
\newblock In {\em Proceedings of the American Control Conference}, pages 1655--1662, 2019.

\bibitem{MAYNE_RakovicRigidTMPC2005219}
D.~Q. Mayne, M.~M. Seron, and S.~V. Rakovi{\'c}.
\newblock Robust model predictive control of constrained linear systems with bounded disturbances.
\newblock {\em Automatica}, 41(2):219--224, 2005.

\bibitem{polver2025robust}
M.~Polver, D.~Limon, F.~Previdi, and A.~Ferramosca.
\newblock Robust tracking mpc for perturbed nonlinear systems.
\newblock {\em IEEE Transactions on Automatic Control}, 71(1):214--229, 2025.

\bibitem{Rakovic2013_LDI_HTMPC}
S.~V. Rakovi{\'c} and Q.~Cheng.
\newblock Homothetic tube {MPC} for constrained linear difference inclusions.
\newblock In {\em Proceedings of the 25th Chinese Control and Decision Conference}, pages 754--761, 2013.

\bibitem{Sasfi2023}
A.~Sasfi, M.~N. Zeilinger, and J.~K{\"o}hler.
\newblock Robust adaptive {MPC} using control contraction metrics.
\newblock {\em Automatica}, 155:111169, 2023.

\bibitem{Villanueva2020}
M.~E. Villanueva, E.~De~Lazzari, M.~A. M{\"u}ller, and B.~Houska.
\newblock A set-theoretic generalization of dissipativity with applications in tube {MPC}.
\newblock {\em Automatica}, 122:109179, 2020.

\bibitem{CCTMPC_VILLANUEVA2024111543}
M.~E. Villanueva, M.~A. M{\"u}ller, and B.~Houska.
\newblock Configuration-constrained tube {MPC}.
\newblock {\em Automatica}, 163:111543, 2024.

\bibitem{ipopt_waechter2006implementation}
A.~W{\"a}chter and L.~T. Biegler.
\newblock On the implementation of an interior-point filter line-search algorithm for large-scale nonlinear programming.
\newblock {\em Mathematical Programming}, 106(1):25--57, 2006.

\bibitem{Ziegler2008-hp}
G.~M. Ziegler.
\newblock {\em Lectures on Polytopes}, volume 152 of {\em Graduate Texts in Mathematics}.
\newblock Springer, New York, NY, 2008.

\end{thebibliography}

\end{document}